
\documentclass{article}
                                             
\usepackage{cite}      
\usepackage{graphicx}
\usepackage{float}
\usepackage{subfigure} 
\usepackage{amsmath}
\usepackage{amsthm}
\usepackage{amssymb}
\usepackage{enumerate}
\usepackage{deleq}   
\usepackage{type1cm} 
\usepackage[usenames,dvipsnames]{color}
\usepackage{pictexwd,dcpic}

\RequirePackage[left=1in,right=1in,top=1in,bottom=1in]{geometry}

\newtheorem{defnt}{Definition}
\newtheorem{remark}{Remark}
\newtheorem{thm}{Theorem}
\newtheorem{lemma}{Lemma}
\newtheorem{example}{Example}
\newtheorem{cor}{Corollary}
\newtheorem{prop}{Proposition}
\newtheorem{prob}{Problem}

\def\({\left(}
\def\){\right)}

\DeclareMathOperator*{\argmin}{arg\,min}

\title{Algebraic Decompositions of {DP} Problems with Linear Dynamics}
\author{Manolis C. Tsakiris and Danielle C.~Tarraf\footnote{The authors are 
with the Department of Electrical \& Computer Engineering Department at 
The Johns Hopkins University, Baltimore, MD, 21218  
(mtsakir1@jhu.edu, dtarraf@jhu.edu).}}
\date{}

\begin{document}

\maketitle
                    
\begin{abstract}

Inspired by rational canonical forms,
we introduce and analyze two decompositions of dynamic programming (DP) problems for systems with linear dynamics.
Specifically, we consider both finite and infinite horizon DP problems in which the dynamics 
are linear, the cost function depends only on the state,
and the state-space is finite dimensional but defined over an arbitrary algebraic field.
Starting from the natural decomposition of the state-space
into the direct sum of subspaces that are invariant under the system's linear transformation, 
and assuming that the cost functions exhibit an additive structure compatible with this decomposition,
we extract from the original DP problem two distinct families of smaller DP problems, 
each associated with a system evolving on an invariant subspace of the original state-space. 
We propose that each of these families constitutes a decomposition of the original problem
when the optimal policy and value function of the original problem can be reconstructed from 
the optimal policies and value functions of the individual subproblems in the family.  
We derive necessary and sufficient conditions for these decompositions to exist both in the finite and infinite horizon cases. 
We also propose a readily verifiable sufficient condition under which the first decomposition exists,
and we show that the first notion of decomposition is generally stronger than the second.
\end{abstract}

\color{black}
\section{Introduction}
\label{Sec:Introduction}

Dynamic programming (DP), pioneered by Bellman \cite{BOOK:Bellma1957}, 
has found wide-ranging applications in diverse areas.
The Principle of Optimality results in a general solution approach that is intuitive.
The case where the state-space is Euclidean, 
the underlying dynamics are linear,
and the cost function is quadratic has been particularly well studied:
It admits an elegant closed form solution obtained 
by solving an appropriate algebraic Ricatti equation in the infinite horizon case,
and admits a time-dependent closed form solution obtained by recursively solving 
the discrete-time Ricatti equation in the finite horizon case \cite{BOOK:Bertse2005}. 
Nonetheless, DP suffers from the curse of dimensionality:
Indeed, the computational complexity of the DP algorithm increases exponentially 
with the dimensions of the underlying state and input spaces.

A natural way of alleviating this problem is by decomposing the problem into smaller subproblems, 
whose solutions are subsequently combined to yield an exact
or suboptimal solution for the original problem \cite{BOOK:Bertse2005}, \cite{BOOK:Powell2011}. 
Various notions of DP decomposition have been studied,
often inspired by the context of the problem such as
operations research \cite{JOUR:Topaloglu2010}, \cite{JOUR:PotWas1987}
and circuit design \cite{JOUR:AlpKah1995}.
Similar decompositions also arise in decentralized and distributed control problems
\cite{JOUR:Rotkowitz2006}, \cite{CONF:SwiLal2010}, \cite{JOUR:ShaPar2013}, \cite{JOUR:LanDel2010}, 
\cite{JOUR:Jadbabaie2008},  \cite{JOUR:BamPagDah2002}, \cite{JOUR:RecDAnd2004},  \cite{CONF:Rantze2009}.
Group theoretic \cite{JOUR:BouDeaGol2000},
graph theoretic  \cite{JOUR:LanChaDAnd2004},
combinatorial approaches \cite{BOOKCHAPTER:RooBodRoss2009},  \cite{JOUR:BouDeaGol2000} 
and bisimulation-based model reduction in the context of Markov Decision Processes \cite{JOUR:GivDeaGre2003}
have also been considered. 
Approximate methods have also been used to decompose DP problems, 
for example by suitably approximating constraints to achieve separability \cite{JOUR:Bertsekas2007}. 

Inspired by rational canonical forms,
we introduce and study new\footnote{Preliminary versions of the results for the finite horizon case 
appeared in \cite{CONF:TsaTar2012a, CONF:TsaTar2012b}.} 
decompositions of DP problems for systems with linear dynamics.
Specifically, we consider both finite and infinite horizon DP problems in which the dynamics 
are linear and the cost function depends only on the state.
The state-space is finite dimensional but defined over an arbitrary algebraic field.
Starting from the natural decomposition of the state-space
into the direct sum of subspaces that are invariant under the system's linear transformation, 
and assuming that the cost functions exhibit an additive structure compatible with this decomposition,
we propose two notions of decomposition. 
In particular, we extract from the original DP problem two distinct families of smaller DP problems, 
each associated with a system evolving on a distinct invariant subspace. 
We propose that each of these families constitutes a decomposition of the original problem
when the optimal policy and value function of the original problem can be reconstructed from 
the optimal policies and value functions of the individual problems in the family.  
We derive necessary and sufficient conditions for these decompositions to exist both in the finite and infinite horizon cases. 
We also propose a readily verifiable sufficient condition under which the first decomposition exists,
and we show that the first notion of decomposition is generally stronger than the second, 
thereby suggesting that further research should focus on this family of problems.

\color{black}
We emphasize that our notions of decomposition involve no approximation,
and are motivated by the desire to utilize algebraic structures inherent in the dynamics to reduce complexity.
The finite state setting remains our primary interest, 
in synergy with our past and ongoing work on analysis and synthesis of finite state machines 
\cite{CONF:TaDaMe2005, JOUR:TaMeDa2008, JOUR:TaMeDa2011, BOOKCHAPTER:TaMeDa2007},
and their use as simple, approximate models of more complex systems over finite alphabets 
\cite{JOUR:Tarraf2012, JPRE:Tarraf2013, CONF:TarDuf2011}.
Nonetheless, since our results are applicable to the general setting 
of finite dimensional but otherwise arbitrary state-spaces,
we present them as such,
while highlighting the complexity reduction achieved in the finite state setting of interest.

The manuscript is organized as follows:
We begin in Section \ref{Sec:Setup} by describing the problem setup and assumptions,
and we state the problem of interest.
We present two families of smaller DP problems in Section \ref{Sec:Decompositions},
and we propose relevant new notions of DP decomposition.
We state our main results in Section \ref{Sec:MainResults},
present a full derivation in Section \ref{Sec:Derivation} and a set of illustrative examples in Section \ref{Sec:Examples}, 
and conclude with directions for future work in Section \ref{Sec:Conclusions}.

{\it Notation:}
$\mathbb{Z}$, $\mathbb{Z}_+$, $\mathbb{R}$ and $\mathbb{R}_+$ denote the set of integers, nonnegative integers, reals 
and nonnegative reals, respectively. 
For $\alpha \in \mathbb{R}$, $\lceil \alpha \rceil$ denotes the \emph{ceiling} of $\alpha$, 
that is, the smallest integer that is greater than or equal to $\alpha$.
For sets $\mathcal{X}$ and $\mathcal{Y}$, 
$\mathcal{X}^{\mathcal{Y}}$ denotes the set of all maps from $\mathcal{Y}$ to $\mathcal{X}$.
In particular, for set $\mathcal{X}$ and index set $\mathcal{T}$,
$\mathcal{X}^{\mathcal{T}}$ denotes the set of all sequences over $\mathcal{X}$ indexed by $\mathcal{T}$,
and $\{x_t\}_{t \in I}$ or $x$ (with some abuse of notation) interchangeably denote an element of $\mathcal{X}^{\mathcal{T}}$.
For $f: \mathcal{X} \rightarrow \mathbb{R}_+$,
$\displaystyle \argmin_{x \in \mathcal{X}} f(x)$ denotes the set of arguments $x$ in $\mathcal{X}$ that minimize $f$.
For maps $f: \mathcal{X} \rightarrow \mathcal{Y}$ and $g: \mathcal{Y} \rightarrow \mathcal{W}$,
$g \circ f$ denotes the composite map from $\mathcal{X}$ to $\mathcal{W}$ defined by $g \circ f (x) = g(f(x))$. 
For vectors $x_1,\hdots,x_k$ of vector space $\mathcal{X}$ over field $\mathcal{F}$, 
$<x_1,\hdots,x_k>_{\mathcal{F}}$ denotes the subspace of $\mathcal{X}$ 
spanned by $x_1,\hdots,x_k$ over $\mathcal{F}$. 
$x'$ denotes the transpose of coordinate vector $x$.
For vector space $\mathcal{X}$, index set $\mathcal{T}$, 
and $x$ and $y$ in $\mathcal{X}^{\mathcal{T}}$,
$x+ y$ denotes an element $z \in \mathcal{X}^{\mathcal{T}}$ such that
$z_t = x_t + y_t$, $\forall t \in \mathcal{T}$.
For linear operator $B$, $\mathcal{R}(B)$ and $\mathcal{N}(B)$ denote the
\emph{range space} and \emph{null space}, respectively. 
$\oplus$ denotes the \emph{direct sum} of subspaces. 
Let $\sigma:\mathcal{X} \rightarrow A$ be a map of vector space $\mathcal{X}$ into some nonempty set $A$,
and assume that $\mathcal{X}=\mathcal{S} \oplus \mathcal{V}$.
$\sigma|_{\mathcal{S}}$ denotes the restriction of the action of $\sigma$ on $\mathcal{S}$, 
defined by  $\sigma|_{\mathcal{S}}(x)=\sigma(s)$ where 
$x=s+v, \, s \in \mathcal{S}, \, v \in \mathcal{V}$ is the unique decomposition of $x$.
For $\displaystyle \mathcal{X} = \oplus_{i \in I} \mathcal{X}_i$,
$\rho_i$ denotes the $i^{th}$ `projection' map,
$\rho_i : \mathcal{X} \rightarrow \mathcal{X}_i$, 
defined by $\rho_{i}(x) = x_i$ where $x_i$ is the unique component of $x$ in $\mathcal{X}_i$,
while 
$[ \oplus_{i \in I} \mathcal{X}_i ]^{\mathbb{Z}_+}$ denotes a sequence
indexed by $\mathbb{Z}_+$ whose components are elements of (vectors in) $\oplus_{i \in I} \mathcal{X}_i$.

\section{Problem Setup \& Statement}
\label{Sec:Setup}

\subsection{Setup}

Let $\mathcal{X}$ and $\mathcal{U}$ be finite dimensional vector spaces defined over algebraic field $\mathcal{F}$, 
with $dim(\mathcal{X})=n$ and $dim(\mathcal{U})=m$.
Consider the discrete-time dynamical system defined by the state transition equation
\begin{equation}
\label{Eq:DTsystem1}
x_{t+1} = A x_t + Bu_t
\end{equation}
where $x_t \in \mathcal{X}$, $u_t \in \mathcal{U}$, and $t \in \mathcal{T}$ for some index set $\mathcal{T}$.
$A: \mathcal{X} \rightarrow \mathcal{X}$ and $B: \mathcal{U} \rightarrow \mathcal{X}$ are given linear maps.
Consider also a non-negative cost function of the state, 
\begin{equation}
\label{Eq:CostFunction}
g: \mathcal{X} \rightarrow \mathbb{R}_+, \textrm{   with   } g(x)=0 \Leftrightarrow x=0.
\end{equation}

\begin{prob}
\label{Prob:1}
\textbf{(The DP problem)}
Consider system (\ref{Eq:DTsystem1}) and cost function (\ref{Eq:CostFunction}).
Given any initial state $x_0 \in \mathcal{X}$, we wish to find among all policies
$\pi : \mathcal{X} \rightarrow \mathcal{U}^{\mathcal{T}}$
 an optimal policy 
$\pi^*(x_0)$ that minimizes the additive cost
\begin{equation}
\label{Eq:AdditiveCost1}
J(x_0,\pi) = \sum_{t \in \overline{\mathcal{T}}} \alpha^t g(x_t)
\end{equation}
along the state trajectory $\{ x_t \}_{t \in \overline{\mathcal{T}}}$ 
starting at $x_0$ and evolving according to (\ref{Eq:DTsystem1}) under policy $\pi$.
We will consider two cases:
\begin{enumerate}
\item The \emph{finite horizon case}, where
$\mathcal{T}=\{0,1,\hdots,T-1\}$ and
$\overline{\mathcal{T}} = \{0,1,\hdots, T \}$
for some given positive integer $T$, and where $\alpha=1$.
\item The \emph{infinite horizon case}, where
$\mathcal{T} = \overline{\mathcal{T}} = \mathbb{Z}_+$,
and where $\alpha \in (0,1)$. \qed
\end{enumerate}
\end{prob}

\begin{remark}
An optimal policy for the DP problem always exists,  though it may not be unique in general.
\end{remark}

We denote the \emph{optimal cost} $J(x_0,\pi^*(x_0))$ in Problem \ref{Prob:1} by $J^*(x_0)$.
We use $(A,B,g,T)$ to denote the finite horizon version of Problem \ref{Prob:1}, 
and $(A,B,g, \alpha)$ to denote the infinite horizon version.

\subsection{The DP Solution}

The solution of Problem \ref{Prob:1} hinges on the {\it Principle of Optimality} \cite{BOOK:Bellma1957}. 
Indeed, for the finite horizon case, 
let $\mathcal{T}_{t}=\left\{t,t+1,\hdots,T-1\right\}$,
$\overline{\mathcal{T}}_{t}=\left\{t,t+1,\hdots,T\right\}$
 and define the {\it cost-to-go function} at time $t$ as
\begin{equation} 
\label{Eq:CostToGo}
J_t^*(x) = 
\left\{ \begin{array}{ccc}
\displaystyle \min_{\mathcal{U}^{\mathcal{T}_{t}}} \sum_{\tau \in \overline{\mathcal{T}}_t} g(x_{\tau}) & , &  \textrm{   when   }t \in \mathcal{T} \\
g(x) & , & \textrm{   when   } t=T
\end{array} \right. 
\end{equation}
for the system evolving according to (\ref{Eq:DTsystem1}) with $x_t=x$.
The principle of optimality can then be stated as 
\begin{equation}
\label{Eq:POfinite}
J_t^*(x) = g(x) + \min_{u \in \mathcal{U}} J_{t+1}^*(Ax+Bu) 
\end{equation}
for any $t \in \mathcal{T}$ and $x \in \mathcal{X}$.
It is used as the basis of the recursive DP algorithm \cite{BOOK:Bertse2005}
which solves these equations backwards in time for the optimal cost
$J^*(x_0) = J_0^*(x_0)$
and an optimal policy $\pi^*(x_0)$ consistent with an 
optimal controller $u^*: \mathcal{X} \times \mathcal{T} \rightarrow \mathcal{U}$ satisfying
\begin{equation}
\label{Eq:u*FH}
u^*(x,t) \in \argmin_{u \in \mathcal{U}} J^*_{t+1} (Ax + B u).
\end{equation}
Specifically, let $\{ x_t\}_{t \in \mathcal{T}}$ be the state trajectory of (\ref{Eq:DTsystem1})
starting from $x_0$ under policy $\pi^*(x_0)$.
The $t^{th}$ component of $\pi^*(x_0)$, which we will denote by $\pi_t^*(x_0)$,
equals $u^*(x_t,t)$.
For notational simplicity, we will sometimes write $u^*_t(x)$ to denote $u^*(x,t)$.

For the infinite horizon case, the principle of optimality gives rise to the \emph{Bellman equation}:
\begin{equation}
\label{Eq:POinfinite}
J^*(x) = g(x) + \alpha \min_{u \in \mathcal{U}} J^*(Ax+Bu).
\end{equation} 
The optimal cost can be determined via iterative methods such as value or policy iterations \cite{BOOK:Bertse2005},
and the resulting optimal policy corresponds to a state-feedback control law satisfying
\begin{equation}
\label{Eq:u*IH}
u^*(x) \in \argmin_{u \in \mathcal{U}} J^* (Ax + B u).
\end{equation}

\begin{remark}
While the DP problem is typically formulated in a setting where 
$\mathcal{X}= \mathbb{R}^n$ and $\mathcal{U} =\mathbb{R}^m$,
it is straightforward to verify that (\ref{Eq:POfinite}) and (\ref{Eq:POinfinite}) hold 
without modifications in the general setting considered here.
\end{remark}

\subsection{Problem Statement}

Consider a DP problem $(A,B,g,T)$ or $(A,B,g, \alpha)$ as formulated in Problem \ref{Prob:1}.
We will assume, without loss of generality, that $B$ is injective: 
That is, any matrix representation of $B$ has full column rank. 
Further assume that $\mathcal{X}$ can be decomposed into the direct sum of $A$-invariant subspaces, 
and that the cost function also exhibits an additive structure compatible with this decomposition. 
Specifically, 
\begin{equation}
\label{Eq:XDecomposition}
\mathcal{X} = \mathcal{X}_1 \oplus \hdots \oplus \mathcal{X}_r
\end{equation}
where 
\begin{equation}
\label{Eq:Xinvariance}
A \mathcal{X}_i \subseteq \mathcal{X}_i, 
\end{equation}
$i \in \mathcal{I}=\{1,\hdots,r\}$ for some $r>1$, and 
\begin{equation}
\label{Eq:gDecomposition}
g(x)  = g(x_1) + \cdots + g(x_r), \textrm{        } \forall x \in \mathcal{X}
\end{equation} 
where $x_i=\rho_i(x)$, the unique component of $x$ in $\mathcal{X}_i$.

We will be referring to a decomposition of $\mathcal{X}$ satisfying (\ref{Eq:XDecomposition}) and (\ref{Eq:Xinvariance})
simply as a \emph{decomposition of} $\mathcal{X}$ \emph{over} $A$,
and we will denote by $\mathcal{G}_s$ the set of all cost functions $g:\mathcal{X} \rightarrow \mathbb{R}_+$ that satisfy
(\ref{Eq:gDecomposition}).

\begin{remark}
A decomposition of $\mathcal{X}$ over $A$
arises naturally {\it regardless} of the underlying field $\mathcal{F}$.
Indeed, in the traditional setting where $\mathcal{X}=\mathbb{R}^n$ and the eigenvalues of $A$ are real, 
this decomposition is related to the generalized eigenspaces associated with the Jordan canonical form of $A$.
The more general case, associated with the rational canonical form, 
is addressed in the literature \cite{BOOK:Roman2008, BOOK:Lang2005}.
In general $r \ge 1$; the case of interest to us here is when $r>1$.
 \end{remark}
 
 {\bf We are fundamentally interested in understanding how, and under what conditions,
can this decomposition of the state-space and associated structure of the cost function be used 
to decompose Problem \ref{Prob:1} into a family of smaller DP problems that can be independently solved, 
with their solutions subsequently combined to yield that of Problem \ref{Prob:1}?}

\section{DP Decompositions}
\label{Sec:Decompositions}

We begin by formulating two families of DP problems consistent with the decomposition of $\mathcal{X}$ over $A$.
We then propose two corresponding notions of DP decomposition.

\subsection{Two Families of Problems}

Consider the subspaces $\mathcal{E}_i$ of $\mathcal{U}$, 
$i \in \mathcal{I}$, defined by
\begin{equation}
\mathcal{E}_i = \left\{u \in \mathcal{U}| Bu \in \mathcal{X}_i \right\}. \nonumber
\end{equation}
Note that $\mathcal{E}_i$ is simply the pre-image of $\mathcal{X}_i$ under $B$.
We are now ready to formulate the first family of DP problems:

\begin{prob}
\label{Prob:2}
Given system (\ref{Eq:DTsystem1}), cost function (\ref{Eq:CostFunction}), 
and a decomposition of $X$ over $A$ as in (\ref{Eq:XDecomposition}) and (\ref{Eq:Xinvariance}).
For each $i \in \mathcal{I}$,
consider the discrete-time dynamical system defined by the state transition equation
\begin{equation}
\label{Eq:DTsystem2}
x_{i,t+1} = A x_{i,t} + B \bar{u}_{i,t}
\end{equation}
where $x_{i,t} \in \mathcal{X}_i$ and $\bar{u}_{i,t} \in \mathcal{E}_i$.
Given any initial state $x_{i,0} \in \mathcal{X}_i$, we wish to find among all policies
$\overline{\pi}_i : \mathcal{X}_i \rightarrow \mathcal{E}_i^{\mathcal{T}}$
 an optimal policy 
$\overline{\pi}_i^*(x_{i,0})$ that minimizes the additive cost
\begin{equation}
\label{Eq:AdditiveCost2}
\overline{J}_i(x_{i,0},\overline{\pi}_i) = \sum_{t \in \overline{\mathcal{T}}} \alpha^t g(x_{i,t})
\end{equation}
along the state trajectory starting at $x_{i,0}$ and evolving according to (\ref{Eq:DTsystem2}) under policy $\overline{\pi}_i$.
We will consider two cases, the finite horizon and the infinite horizon ones,
with $\mathcal{T}$, $\overline{\mathcal{T}}$ and $\alpha$ defined as in Problem \ref{Prob:1}. \qed
\end{prob}

We denote the optimal cost $\overline{J}_i(x_{i,0},\overline{\pi}_i^*(x_{i,0}))$ of the 
$i^{th}$ subproblem of Problem \ref{Prob:2} by $\overline{J}_i^*(x_{i,0})$.
We use $\big\{ (A,B|_{\mathcal{E}_i},g,T) \big\}_{ i \in \mathcal{I}}$
to denote the entire family of finite horizon DP problems formulated in Problem \ref{Prob:2},
with $(A,B|_{\mathcal{E}_i},g,T)$ denoting the $i^{th}$ subproblem.
Similarly, we use $\big\{ (A,B|_{\mathcal{E}_i},g, \alpha) \big\}_{ i \in \mathcal{I}}$
to denote the entire family of infinite horizon DP problems formulated in Problem \ref{Prob:2},
with $(A,B|_{\mathcal{E}_i},g, \alpha)$ denoting the $i^{th}$ subproblem.

We formulate the second family of DP problems as follows:

\begin{prob}
\label{Prob:3}
Given system (\ref{Eq:DTsystem1}), cost function (\ref{Eq:CostFunction}), 
and a decomposition of $X$ over $A$ as in (\ref{Eq:XDecomposition}) and (\ref{Eq:Xinvariance}).
For each $i \in \mathcal{I}$,
consider the discrete-time dynamical system defined by the state transition equation
\begin{equation}
\label{Eq:DTsystem3}
x_{i,t+1} = A x_{i,t} + \rho_i \circ B u_{i,t}
\end{equation}
where $x_{i,t} \in \mathcal{X}_i$ and $u_{i,t} \in \mathcal{U}$.
Given any initial state $x_{i,0} \in \mathcal{X}$, we wish to find among all policies
$\pi_i : \mathcal{X}_i \rightarrow \mathcal{U}^{\mathcal{T}}$
 an optimal policy 
$\pi_i^*(x_{i,0})$ that minimizes the additive cost
\begin{equation}
\label{Eq:AdditiveCost3}
J_i(x_{i,0},\pi_i) = \sum_{t \in \overline{\mathcal{T}}} \alpha^t g(x_{i,t})
\end{equation}
along the state trajectory starting at $x_{i,0}$ and evolving according to (\ref{Eq:DTsystem3}) under policy $\pi_i$.
We will consider two cases, the finite horizon and the infinite horizon ones,
with $\mathcal{T}$, $\overline{\mathcal{T}}$ and $\alpha$ defined as in Problem \ref{Prob:1}. \qed
\end{prob}

We denote the optimal cost $J_i(x_{i,0},\pi_i^*(x_{i,0}))$ of the 
$i^{th}$ subproblem of Problem \ref{Prob:3} by $J_i^*(x_{i,0})$.
We use $\big\{ (A,\rho_i \circ B,g,T) \big\}_{ i \in \mathcal{I}}$
to denote the entire family of finite horizon DP problems formulated in Problem \ref{Prob:3},
with $(A,\rho_i \circ B,g,T)$ denoting the $i^{th}$ subproblem.
Similarly, we use $\big\{ (A,\rho_i \circ B,g, \alpha) \big\}_{ i \in \mathcal{I}}$
to denote the entire family of infinite horizon DP problems formulated in Problem \ref{Prob:3},
with $(A,\rho_i \circ B,g, \alpha)$ denoting the $i^{th}$ subproblem.

\subsection{Proposed Notions of Decomposition}

For each of the two families of DP problems, 
we now propose a corresponding notion of decomposition.

\begin{defnt} 
\label{Def:Decomp1}
Consider system (\ref{Eq:DTsystem1}),
a decomposition of $\mathcal{X}$ over $A$,
and a cost function (\ref{Eq:CostFunction}) satisfying $g \in \mathcal{G}_s$. 
The family $\big\{ (A,B|_{\mathcal{E}_i},g,T) \big\}_{ i \in \mathcal{I}}$ 
is a decomposition of $(A,B,g,T)$
if for any $x \in \mathcal{X}$, we have
\begin{equation}
\label{Eq:JDecomp1Cond}
J^*(x) = \sum_{i \in \mathcal{I}} \bar{J}^*_{i}(x_i)
\end{equation}
where $x_i =\rho_i(x)$,
and moreover, for any choice of optimal policies $\overline{\pi}_i^*(x_i)$, for $i \in \mathcal{I}$,
there exists an optimal policy $\pi^*(x)$ such that
\begin{equation}
\label{Eq:PiDecomp1Cond}
\pi^*(x) = \sum_{i \in \mathcal{I}} \overline{\pi}_i^* (x_i). 
\end{equation}       
Likewise, the family $\big\{ (A,B|_{\mathcal{E}_i},g, \alpha) \big\}_{ i \in \mathcal{I}}$ 
is a decomposition of $(A,B,g, \alpha)$ if for any $x \in \mathcal{X}$
we have (\ref{Eq:JDecomp1Cond}), 
and for any choice of optimal policies $\overline{\pi}_i^*(x_i)$, $i \in \mathcal{I}$,
there exists an optimal policy $\pi^*(x)$ such that (\ref{Eq:PiDecomp1Cond}) holds.
\end{defnt}

When $\big\{ (A,B|_{\mathcal{E}_i},g,T) \big\}_{ i \in \mathcal{I}}$ is a decomposition of $(A,B,g,T)$,
we can independently solve each of the smaller DP problems over time horizon $T$ and then simply add their solutions to obtain
the optimal policy and associated optimal cost of the original DP problem over the same time horizon.
Likewise when $\big\{ (A,B|_{\mathcal{E}_i},g, \alpha) \big\}_{ i \in \mathcal{I}}$ is a decomposition of $(A,B,g, \alpha)$.

\begin{defnt}
\label{Def:Decomp2}
Consider system (\ref{Eq:DTsystem1}),
a decomposition of $\mathcal{X}$ over $A$,
and a cost function (\ref{Eq:CostFunction}) satisfying $g \in \mathcal{G}_s$. 
The family $\big\{ (A,\rho_i \circ B ,g,T) \big\}_{ i \in \mathcal{I}}$ 
is a decomposition of $(A, B,g,T)$
if for any $x \in \mathcal{X}$, we have
\begin{equation}
\label{Eq:JDecomp2Cond}
J^*(x) = \sum_{i \in \mathcal{I}} J^*_{i}(x_i)
\end{equation}
where $x_i =\rho_i(x)$,
and moreover, for any choice of optimal policies $\pi_i^*(x_i)$, for $i \in \mathcal{I}$,
there exists an optimal policy $\pi^*(x)$ such that
\begin{equation}
\label{Eq:PiDecomp2Cond}
B \pi_t^*(x) = \sum_{i \in \mathcal{I}} \rho_i \circ B \pi_{i,t}^* (x_i), \forall t \in \mathcal{T}.
\end{equation}       
Likewise, the family $\big\{ (A,\rho_i \circ B ,g, \alpha) \big\}_{ i \in \mathcal{I}}$ 
is a decomposition of $(A, B,g, \alpha)$ if for any $x \in \mathcal{X}$
we have (\ref{Eq:JDecomp2Cond}), 
and for any choice of optimal policies $\pi_i^*(x_i)$, $i \in \mathcal{I}$,
there exists an optimal policy $\pi^*(x)$ such that (\ref{Eq:PiDecomp2Cond}) holds.
\end{defnt}

\color{black}
\section{Main Results}
\label{Sec:MainResults}

We begin by completely characterizing conditions under which the family of subproblems formulated in Problem \ref{Prob:2} 
is a decomposition of the original DP problem formulated in Problem \ref{Prob:1}.

\begin{lemma}
\label{Lemma:CondDecomp1FH}
Consider system (\ref{Eq:DTsystem1}),
a decomposition of $\mathcal{X}$ over $A$,
and a cost function (\ref{Eq:CostFunction}) satisfying $g \in \mathcal{G}_s$. 
$\big\{ (A,B|_{\mathcal{E}_i},g,T) \big\}_{ i \in \mathcal{I}}$ is a decomposition of $(A,B,g,T)$
iff
\begin{equation}
\label{Eq:NSCDecomp1FH}
\argmin_{u \in \mathcal{U}} J_{t+1}^*(Ax+Bu) \cap \left[\oplus_{i \in \mathcal{I}} \mathcal{E}_i \right] \neq \emptyset, \forall x \in \mathcal{X}, \forall t \in \mathcal{T}.
\end{equation} 
\end{lemma}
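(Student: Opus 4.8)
The plan is to reduce the claim to a statement about the cost-to-go functions and then exploit the principle of optimality (\ref{Eq:POfinite}) one stage at a time. The central object is the comparison between $J_t^*(x)$ and $\sum_{i\in\mathcal{I}}\bar{J}_{i,t}^*(x_i)$. First I would record a one-sided inequality that holds unconditionally: given any optimal subpolicies $\bar{\pi}_i^*$, the sequence $\sum_{i}\bar{\pi}_i^*(x_i)$ is a feasible policy for $(A,B,g,T)$, and because $A\mathcal{X}_i\subseteq\mathcal{X}_i$, $B\mathcal{E}_i\subseteq\mathcal{X}_i$, and the initial components are $x_i=\rho_i(x)$, an easy induction on $t$ shows that the resulting trajectory satisfies $\rho_i(x_t)=x_{i,t}$, where $x_{i,t}$ is the $i$th subproblem trajectory. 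Summing (\ref{Eq:gDecomposition}) along this trajectory gives cost $\sum_i\bar{J}_i^*(x_i)$, so $J_t^*(x)\le\sum_i\bar{J}_{i,t}^*(x_i)$ for every $t$ and $x$. The same computation shows that once the value identity (\ref{Eq:JDecomp1Cond}) holds, this combined policy is automatically optimal, so the policy condition (\ref{Eq:PiDecomp1Cond}) comes for free; the notion of decomposition in Definition \ref{Def:Decomp1} thus collapses to the value identity (\ref{Eq:JDecomp1Cond}).

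Next I would isolate the algebra linking $\oplus_{i}\mathcal{E}_i$ to the projections. Since $B$ is injective and the $\mathcal{X}_i$ are independent, $u\in\oplus_{i}\mathcal{E}_i$ iff $\rho_i(Bu)\in\mathcal{R}(B)$ for every $i$, and $B(\oplus_i\mathcal{E}_i)=\oplus_i(\mathcal{R}(B)\cap\mathcal{X}_i)$. Writing $\rho_i(Ax+Bu)=Ax_i+\rho_i(Bu)$ by $A$-invariance, the principle of optimality (\ref{Eq:POfinite}) together with the inductive hypothesis $J_{t+1}^*=\sum_i\bar{J}_{i,t+1}^*\circ\rho_i$ turns $\min_{u\in\mathcal{U}}J_{t+1}^*(Ax+Bu)$ into the minimization of a separable objective $\sum_i\phi_i(\rho_i(Bu))$, with $\phi_i(v)=\bar{J}_{i,t+1}^*(Ax_i+v)$, over $u\in\mathcal{U}$, while the subproblem side is the decoupled minimization of the same objective over $\prod_i(\mathcal{R}(B)\cap\mathcal{X}_i)$. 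As the latter feasible set sits inside the former, the minimum over $\mathcal{U}$ is at most the sum of the subproblem minima, and the two agree precisely when some joint minimizer can be chosen in $\oplus_i\mathcal{E}_i$, which is exactly (\ref{Eq:NSCDecomp1FH}) at that $t$. This per-stage equivalence is the key step and carries the bulk of the computation.

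For the \emph{if} direction I would run a backward induction on $t$. The base case $t=T$ is immediate from (\ref{Eq:gDecomposition}) since $J_T^*=g$. Assuming $J_{t+1}^*=\sum_i\bar{J}_{i,t+1}^*\circ\rho_i$, condition (\ref{Eq:NSCDecomp1FH}) supplies the joint minimizer in $\oplus_i\mathcal{E}_i$ required above, and the previous paragraph yields $J_t^*=\sum_i\bar{J}_{i,t}^*\circ\rho_i$; at $t=0$ this is (\ref{Eq:JDecomp1Cond}), and the policy condition follows for free by the first paragraph. Hence (\ref{Eq:NSCDecomp1FH}) for all $x,t$ implies that the family is a decomposition.

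The \emph{only if} direction is where I expect the real difficulty. Being a decomposition only delivers (\ref{Eq:JDecomp1Cond}) at $t=0$, whereas (\ref{Eq:NSCDecomp1FH}) must be produced at every $t$. Combining the principle of optimality at $t=0$ with the unconditional inequality of the first paragraph yields agreement of the two minima only on the reachable affine sets $Ax+\mathcal{R}(B)$, which does not by itself give the value identity, and with it (\ref{Eq:NSCDecomp1FH}), at intermediate stages. To close this gap I would study the nonnegative discrepancy $D_t(x)=\sum_i\bar{J}_{i,t}^*(x_i)-J_t^*(x)$: by time-invariance of (\ref{Eq:DTsystem1}) and (\ref{Eq:DTsystem2}) it is the discrepancy of the horizon-$(T-t)$ problems, it vanishes identically at $t=T$ by (\ref{Eq:gDecomposition}), and it vanishes at $t=0$ by hypothesis. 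The crux is to show, using $g\ge 0$ with $g(x)=0\Leftrightarrow x=0$ together with the monotonicity of the DP recursion, that $D_t$ cannot grow as the remaining horizon shrinks (equivalently $D_t\ge D_{t+1}$ pointwise); squeezing $0=D_0\ge D_1\ge\cdots\ge D_T=0$ then forces $D_t\equiv 0$ for all $t$, after which the per-stage characterization of the second paragraph returns (\ref{Eq:NSCDecomp1FH}) at every stage. Establishing this monotonicity of the discrepancy in the horizon is the step I expect to demand the most care.
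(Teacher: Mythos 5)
Your first three paragraphs are sound and essentially reproduce the paper's sufficiency proof: the paper proves the same two facts by backward induction (that $J_t^*\in\mathcal{G}_s$ and that $J_t^*|_{\mathcal{X}_i}=\bar J_{i,t}^*$, its Propositions \ref{Prop:Lemma1-1} and \ref{Prop:Lemma1-2}, which you merge into the single hypothesis $J_{t+1}^*=\sum_i \bar J_{i,t+1}^*\circ\rho_i$), and your observation that the policy condition comes for free once (\ref{Eq:JDecomp1Cond}) holds is exactly the paper's concluding cost computation with $\pi(x)=\sum_i\overline{\pi}_i^*(x_i)$. The genuine gap is in your \emph{only if} direction, and it is twofold. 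First, you throw away precisely the hypothesis the paper uses: a decomposition in the sense of Definition \ref{Def:Decomp1} supplies not only the value identity at $t=0$ but also the policy condition (\ref{Eq:PiDecomp1Cond}). Since each $\overline{\pi}_{i,t}^*(x_i)\in\mathcal{E}_i$, the optimal policy $\pi^*(x)=\sum_i\overline{\pi}_i^*(x_i)$ takes all its values in $\oplus_{i\in\mathcal{I}}\mathcal{E}_i$, and by the principle of optimality (\ref{Eq:POfinite}), (\ref{Eq:u*FH}), its $t$-th component attains $\min_{u\in\mathcal{U}}J_{t+1}^*(Ax_t+Bu)$ along the optimal trajectory; this is the paper's entire necessity argument, and your reduction of ``decomposition'' to the bare value identity (\ref{Eq:JDecomp1Cond}) discards it, which is why you are forced into the discrepancy machinery.

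Second, the step you yourself flag as delicate does not hold as stated: the pointwise monotonicity $D_t(x)\ge D_{t+1}(x)$ \emph{at the same state} $x$ is not a consequence of the DP recursion. Note that $\sum_i\bar J_{i,t}^*(x_i)$ is the optimal cost-to-go of the problem with inputs constrained to the subspace $\oplus_i\mathcal{E}_i$ (the constrained dynamics decouple along the $\mathcal{X}_i$ and $g\in\mathcal{G}_s$ makes the cost add). Expanding both Bellman recursions and comparing at a constrained-optimal input $u^\sharp\in\oplus_i\mathcal{E}_i$ yields only $D_t(x)\ge D_{t+1}(Ax+Bu^\sharp)$, i.e., monotonicity of the discrepancy \emph{along constrained-optimal transitions}, not at a fixed $x$: the two quantities in $D_t(x)-D_{t+1}(x)$ involve value functions at different successor states, and nothing forces the constrained problem's handicap to shrink with the remaining horizon at the same state. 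Consequently your squeeze $0=D_0\ge D_1\ge\cdots\ge D_T=0$ does not close; the repaired, trajectory-wise version certifies (\ref{Eq:NSCDecomp1FH}) only at states visited by the combined optimal trajectories --- which is no more than what the paper's one-line argument via (\ref{Eq:PiDecomp1Cond}) and (\ref{Eq:POfinite}) already delivers, at far lower cost. So for necessity you should argue as the paper does, from the policy half of the definition, rather than attempt the unproven (and, in general, false) pointwise monotonicity of $D_t$.
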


\begin{lemma}
\label{Lemma:CondDecomp1IH}
Consider system (\ref{Eq:DTsystem1}),
a decomposition of $\mathcal{X}$ over $A$,
and a cost function (\ref{Eq:CostFunction}) satisfying $g \in \mathcal{G}_s$. 
$\big\{ (A,B|_{\mathcal{E}_i},g, \alpha) \big\}_{ i \in \mathcal{I}}$ is a decomposition of $(A,B,g,\alpha)$
iff
\begin{equation}
\label{Eq:NSCDecomp1IH}
\argmin_{\pi \in \mathcal{U}^{\mathbb{Z}^+}} J(x,\pi) \cap [ \oplus_{i \in \mathcal{I}} \mathcal{E}_i]^{\mathbb{Z}^+} \neq \emptyset, \forall x \in \mathcal{X}.
\end{equation} 
\end{lemma}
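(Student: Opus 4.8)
The plan is to reduce everything to a single decoupling observation and then read off both directions from Definition \ref{Def:Decomp1}; the infinite-horizon condition (\ref{Eq:NSCDecomp1IH}) is stated at the level of full control sequences, which makes the argument more direct than a stagewise one.

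First I would record the structural fact that controls confined to $\oplus_{i \in \mathcal{I}} \mathcal{E}_i$ make the full system split into the $r$ subsystems of Problem \ref{Prob:2}. The $\mathcal{E}_i$ are independent, since $u \in \mathcal{E}_i \cap \mathcal{E}_j$ forces $Bu \in \mathcal{X}_i \cap \mathcal{X}_j = \{0\}$ and $B$ is injective, so $\oplus_{i} \mathcal{E}_i$ is a genuine direct sum inside $\mathcal{U}$. Fix $x \in \mathcal{X}$ and a sequence $u = \{u_t\} \in [\oplus_{i} \mathcal{E}_i]^{\mathbb{Z}_+}$, and write $u_t = \sum_{i} u_{i,t}$ with $u_{i,t} \in \mathcal{E}_i$. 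Because $Bu_{i,t} \in \mathcal{X}_i$ we get $\rho_i(Bu_t) = Bu_{i,t}$, and because each $\mathcal{X}_i$ is $A$-invariant we get $\rho_i(Ax_t) = A\rho_i(x_t)$; an induction on $t$ then yields $\rho_i(x_t) = x_{i,t}$, where $x_{i,t}$ is the trajectory of (\ref{Eq:DTsystem2}) from $x_i = \rho_i(x)$ under $\{u_{i,t}\}$. Invoking $g \in \mathcal{G}_s$, i.e. (\ref{Eq:gDecomposition}), and interchanging the (nonnegative, finitely indexed in $i$) sums gives the key identity $J(x,u) = \sum_{i} \overline{J}_i(x_i,\{u_{i,t}\})$.

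For sufficiency, assume (\ref{Eq:NSCDecomp1IH}). Combining arbitrary optimal subpolicies always produces a feasible sequence in $[\oplus_i \mathcal{E}_i]^{\mathbb{Z}_+}$, so the identity gives $J^*(x) \le \sum_i \overline{J}_i^*(x_i)$ with no extra hypothesis. For the reverse inequality I would take, by (\ref{Eq:NSCDecomp1IH}), an optimal full sequence $u^* \in [\oplus_i \mathcal{E}_i]^{\mathbb{Z}_+}$; the identity then forces $J^*(x) = \sum_i \overline{J}_i(x_i,\{u^*_{i,t}\}) \ge \sum_i \overline{J}_i^*(x_i)$, which establishes (\ref{Eq:JDecomp1Cond}) and, as a byproduct, that each $\{u^*_{i,t}\}$ is subproblem-optimal. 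Finally, given any optimal subpolicies $\overline{\pi}_i^*(x_i)$, the identity shows their sum attains cost $\sum_i \overline{J}_i^*(x_i) = J^*(x)$, hence is an optimal full sequence realizing (\ref{Eq:PiDecomp1Cond}); so the family is a decomposition. For necessity, assume the family is a decomposition, fix $x$, and pick any optimal subpolicies $\overline{\pi}_i^*(x_i)$ with sequences in $\mathcal{E}_i^{\mathbb{Z}_+}$. By Definition \ref{Def:Decomp1} there is an optimal full policy with $\pi^*(x) = \sum_i \overline{\pi}_i^*(x_i)$; this sequence lies in $[\oplus_i \mathcal{E}_i]^{\mathbb{Z}_+}$ and satisfies $J(x,\pi^*(x)) = J^*(x)$, hence belongs to the intersection in (\ref{Eq:NSCDecomp1IH}), which is therefore nonempty for every $x$.

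The main obstacle I anticipate is technical rather than logical: justifying the decoupling identity cleanly in the infinite-horizon setting, including the interchange of the sums over $t$ and over $i$ (legitimate by nonnegativity of $g$, with values taken in $[0,+\infty]$) and the appeal to the Remark guaranteeing that the minimizing sequences exist, so that both sides of (\ref{Eq:NSCDecomp1IH}) and (\ref{Eq:JDecomp1Cond}) are well defined.
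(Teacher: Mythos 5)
Your proof is correct and follows essentially the same route as the paper's: necessity is read off directly from (\ref{Eq:PiDecomp1Cond}) in Definition \ref{Def:Decomp1}, and sufficiency comes from the two inequalities $J^*(x) \ge \sum_{i} \bar{J}_i^*(x_i)$ (via an optimal sequence lying in $[\oplus_{i} \mathcal{E}_i]^{\mathbb{Z}_+}$, guaranteed by (\ref{Eq:NSCDecomp1IH})) and $\sum_{i} \bar{J}_i^*(x_i) \ge J^*(x)$ (by summing arbitrary optimal subpolicies), with the second holding for every choice of subpolicies, which yields (\ref{Eq:PiDecomp1Cond}). Your only departure is cosmetic: you isolate the decoupling identity $J(x,u) = \sum_{i} \overline{J}_i(x_i, \{u_{i,t}\})$ as a single lemma applied twice, making explicit the trajectory splitting $\rho_i(x_t) = x_{i,t}$ and the interchange of the sums over $t$ and $i$, both of which the paper carries out inline and leaves implicit.
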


Note that verifying the necessary and sufficient conditions established in Lemmas \ref{Lemma:CondDecomp1FH} 
and \ref{Lemma:CondDecomp1IH} effectively require solving the original DP problem.
Alternatively, we propose a readily verifiable sufficient condition to ensure that the decomposition exists.

\begin{thm} 
\label{Thm:SCDecomp1}
Consider system (\ref{Eq:DTsystem1}),
a decomposition of $\mathcal{X}$ over $A$,
and a cost function (\ref{Eq:CostFunction}) satisfying $g \in \mathcal{G}_s$. 
If 
\begin{equation}
\label{Eq:RangeBCondition}
\mathcal{R}(B) = \oplus_{i \in \mathcal{I}} \left[\mathcal{R}(B) \cap \mathcal{X}_i\right],
\end{equation} 
then $\big\{ (A,B|_{\mathcal{E}_i},g,T) \big\}_{ i \in \mathcal{I}}$ is a decomposition of $(A,B,g,T)$ for any choice of $T>0$,
and $\big\{ (A,B|_{\mathcal{E}_i},g, \alpha) \big\}_{ i \in \mathcal{I}}$ is a decomposition of $(A,B,g,\alpha)$.
\end{thm}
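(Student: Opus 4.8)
The plan is to reduce the theorem to Lemmas \ref{Lemma:CondDecomp1FH} and \ref{Lemma:CondDecomp1IH} by showing that the algebraic hypothesis (\ref{Eq:RangeBCondition}) is in fact equivalent to the single identity $\oplus_{i \in \mathcal{I}} \mathcal{E}_i = \mathcal{U}$. Once this identity is in hand, both the finite and infinite horizon conditions collapse: the intersections appearing in (\ref{Eq:NSCDecomp1FH}) and (\ref{Eq:NSCDecomp1IH}) are then taken against the full control space, so they reduce to the nonemptiness of $\argmin_{u \in \mathcal{U}} J_{t+1}^*(Ax+Bu)$ and of $\argmin_{\pi \in \mathcal{U}^{\mathbb{Z}_+}} J(x,\pi)$ respectively, both of which hold because an optimal policy always exists. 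Thus essentially all the content of the theorem lies in the linear-algebraic identity, after which the two Lemmas do the remaining work.

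The core step I would carry out first is therefore the identity $\oplus_{i \in \mathcal{I}} \mathcal{E}_i = \mathcal{U}$. Recalling that $\mathcal{E}_i$ is the preimage $B^{-1}(\mathcal{X}_i)$, I would begin by checking that the sum $\sum_{i \in \mathcal{I}} \mathcal{E}_i$ is direct: if $\sum_i u_i = 0$ with $u_i \in \mathcal{E}_i$, then $\sum_i Bu_i = 0$ with $Bu_i \in \mathcal{X}_i$, so the independence of the $\mathcal{X}_i$ in (\ref{Eq:XDecomposition}) forces each $Bu_i = 0$, and injectivity of $B$ forces each $u_i = 0$. Next I would record that $B \mathcal{E}_i = \mathcal{R}(B) \cap \mathcal{X}_i$, whence $B(\oplus_{i} \mathcal{E}_i) = \sum_{i} \left[\mathcal{R}(B) \cap \mathcal{X}_i\right]$, a sum that is itself automatically direct since its $i^{th}$ summand lies in $\mathcal{X}_i$. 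Under hypothesis (\ref{Eq:RangeBCondition}) this last space equals $\mathcal{R}(B) = B\mathcal{U}$.

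It then remains to transfer this equality of images back to the domain, which is where injectivity of $B$ (assumed without loss of generality) is essential. From $B(\oplus_{i} \mathcal{E}_i) = B\mathcal{U}$, any $u \in \mathcal{U}$ satisfies $Bu = Bv$ for some $v \in \oplus_i \mathcal{E}_i$, so injectivity gives $u = v$ and hence $\mathcal{U} \subseteq \oplus_i \mathcal{E}_i$; the reverse inclusion is immediate. This establishes $\oplus_{i} \mathcal{E}_i = \mathcal{U}$. The finite horizon decomposition claim then follows from Lemma \ref{Lemma:CondDecomp1FH} uniformly in $T$, since (\ref{Eq:NSCDecomp1FH}) holds for every $x$ and $t$; and the infinite horizon claim follows from Lemma \ref{Lemma:CondDecomp1IH}, since $[\oplus_i \mathcal{E}_i]^{\mathbb{Z}_+} = \mathcal{U}^{\mathbb{Z}_+}$ makes (\ref{Eq:NSCDecomp1IH}) hold for every $x$.

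I do not expect a serious obstacle, as the Lemmas have already done the heavy lifting and the field $\mathcal{F}$ plays no role beyond elementary subspace arithmetic. The one point requiring care is precisely the passage from the range identity to the domain identity: without injectivity of $B$, the hypothesis would only constrain $\mathcal{R}(B)$ and would not pin down $\oplus_i \mathcal{E}_i$ inside $\mathcal{U}$, so I would be careful to invoke injectivity exactly once and at that step.
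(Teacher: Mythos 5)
Your proposal is correct and follows essentially the same route as the paper: the paper's proof also reduces (\ref{Eq:RangeBCondition}) to the identity $\mathcal{U}=\sum_{i \in \mathcal{I}} \mathcal{E}_i$ (its Proposition \ref{Prop:RangeConditionInterpret}, which you re-derive, using injectivity of $B$ where the paper instead notes $\mathcal{N}(B)\subseteq\mathcal{E}_i$ --- equivalent here since $B$ is injective without loss of generality) and then observes that (\ref{Eq:NSCDecomp1FH}) and (\ref{Eq:NSCDecomp1IH}) hold trivially, so Lemmas \ref{Lemma:CondDecomp1FH} and \ref{Lemma:CondDecomp1IH} finish the argument.
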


We also show that under certain conditions on the dynamics of (\ref{Eq:DTsystem1}), 
this condition becomes necessary as well as sufficient.

\begin{thm}
\label{Thm:NSCDecomp1}
Consider system (\ref{Eq:DTsystem1}),
a decomposition of $\mathcal{X}$ over $A$,
and a cost function (\ref{Eq:CostFunction}) satisfying $g \in \mathcal{G}_s$. 
When $A$ is invertible, the following three statements are equivalent:
\begin{enumerate}[(a)]
\item Condition (\ref{Eq:RangeBCondition}) holds.
\item $\big\{ (A,B|_{\mathcal{E}_i},g,T) \big\}_{ i \in \mathcal{I}}$ is a decomposition of $(A,B,g,T)$, for any choice of $T >0$.
\item $\big\{ (A,B|_{\mathcal{E}_i},g, \alpha) \big\}_{ i \in \mathcal{I}}$ is a decomposition of $(A,B,g,\alpha)$.
\end{enumerate}
\end{thm}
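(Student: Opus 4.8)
The plan is to obtain the equivalence from four implications. Two of them, (a)$\Rightarrow$(b) and (a)$\Rightarrow$(c), are already supplied by Theorem \ref{Thm:SCDecomp1}, which establishes exactly these and does not use invertibility of $A$. It then remains to prove (b)$\Rightarrow$(a) and (c)$\Rightarrow$(a), which I would establish by contraposition; this is precisely where invertibility of $A$ enters. Combining the four implications yields (a)$\Leftrightarrow$(b)$\Leftrightarrow$(c).

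The first step is a reformulation of condition (\ref{Eq:RangeBCondition}) in terms of the input space. Using injectivity of $B$ and $\mathcal{E}_i = \{u : Bu \in \mathcal{X}_i\}$, I would record the elementary facts that $B(\mathcal{E}_i) = \mathcal{R}(B) \cap \mathcal{X}_i$, that the subspaces $\mathcal{E}_i$ are in direct sum, and hence that $B\left(\oplus_{i} \mathcal{E}_i\right) = \oplus_{i}\left[\mathcal{R}(B) \cap \mathcal{X}_i\right]$; in particular $u \in \oplus_i \mathcal{E}_i$ iff $Bu \in \oplus_i[\mathcal{R}(B)\cap\mathcal{X}_i]$. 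Since the inclusion $\oplus_{i}[\mathcal{R}(B)\cap\mathcal{X}_i] \subseteq \mathcal{R}(B)$ always holds, condition (\ref{Eq:RangeBCondition}) fails precisely when this inclusion is proper, i.e. when there exists $b \in \mathcal{R}(B)$ with $b \notin \oplus_i[\mathcal{R}(B)\cap\mathcal{X}_i]$. Writing $b = Bu_*$ for the unique $u_* \in \mathcal{U}$, the equivalence just stated gives $u_* \notin \oplus_i\mathcal{E}_i$.

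The core construction exploits invertibility of $A$: set $x_0 = -A^{-1} b$, so that $A x_0 = -b$ and hence $A x_0 + B u = 0 \Leftrightarrow Bu = b \Leftrightarrow u = u_*$. For the finite horizon take $T = 1$, where $J_1^*(\cdot) = g(\cdot)$. Since $g(y) = 0$ iff $y = 0$, the map $u \mapsto J_1^*(Ax_0 + Bu) = g(Ax_0 + Bu)$ attains its minimum value $0$ at the single point $u_*$, so $\argmin_{u} J_1^*(Ax_0 + Bu) = \{u_*\}$, which misses $\oplus_i\mathcal{E}_i$. This violates (\ref{Eq:NSCDecomp1FH}) at $x_0$, $t=0$, so by Lemma \ref{Lemma:CondDecomp1FH} the family is not a decomposition for $T=1$, and (b) fails. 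For the infinite horizon the same $x_0$ gives $J^*(x_0) = g(x_0)$: the bound $J(x_0,\pi) \ge g(x_0)$ is immediate, and the policy $u_0 = u_*$, $u_t = 0$ for $t \ge 1$ drives $x_t = 0$ for all $t \ge 1$ and attains it. Because every cost term is nonnegative and $\alpha^t > 0$, any optimal policy must satisfy $x_t = 0$ for all $t \ge 1$, in particular $x_1 = Ax_0 + Bu_0 = 0$, forcing its first control to be $u_*$; hence no optimal policy lies in $[\oplus_i\mathcal{E}_i]^{\mathbb{Z}_+}$, violating (\ref{Eq:NSCDecomp1IH}) and showing (c) fails.

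I expect the main obstacle to be the careful verification that the optimal first-stage control is genuinely unique, and lies outside $\oplus_i\mathcal{E}_i$, rather than merely nonunique with one bad representative, since the conditions in Lemmas \ref{Lemma:CondDecomp1FH} and \ref{Lemma:CondDecomp1IH} only ask that the argmin intersect $\oplus_i\mathcal{E}_i$. The strict positivity of $g$ away from $0$ is exactly what collapses the argmin to the singleton $\{u_*\}$ and makes the violation unavoidable. It is also worth isolating where invertibility is used: it guarantees $b \in \mathcal{R}(A)$, so an initial state with $A x_0 = -b$ exists; absent invertibility the chosen $b$ need not lie in $\mathcal{R}(A)$ and the one-step-to-zero construction breaks, consistent with the hypothesis that $A$ be invertible.
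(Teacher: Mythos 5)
Your proposal is correct and takes essentially the same route as the paper: where the paper factors the hard directions through Propositions \ref{Prop:NCDecomp1FH} and \ref{Prop:NCDecomp1IH} (a decomposition forces $A(\mathcal{X}) \cap B(\mathcal{V})=\{0\}$, and invertibility of $A$ then kills $\mathcal{V}$, giving (\ref{Eq:RangeBCondition}) via Proposition \ref{Prop:RangeConditionInterpret}), your contrapositive witness $x_0=-A^{-1}Bu_*$ with singleton $\argmin = \{u_*\} \not\subseteq \oplus_i \mathcal{E}_i$ is the identical one-step-to-zero mechanism, resting on the same two pillars (injectivity of $B$ and $g(y)=0 \Leftrightarrow y=0$). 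Your infinite-horizon verification is in fact slightly more explicit than the paper's (which defers to an ``argument similar'' to the finite-horizon case), but the mathematical content coincides.
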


We next turn our attention to completely characterizing conditions under which the family of subproblems formulated in Problem \ref{Prob:3} 
is a decomposition of the original DP problem formulated in Problem \ref{Prob:1}.

\begin{lemma}
\label{Lemma:CondDecomp2FH}
Consider system (\ref{Eq:DTsystem1}),
a decomposition of $\mathcal{X}$ over $A$,
and a cost function (\ref{Eq:CostFunction}) satisfying $g \in \mathcal{G}_s$. 
$\big\{ (A,\rho_i \circ B ,g,T) \big\}_{ i \in \mathcal{I}}$ 
is a decomposition of $(A,B,g,T)$ iff
for any choice of optimal control laws of $\big\{ (A,\rho_i \circ B ,g,T) \big\}$, $i \in \mathcal{I}$,
there exists an optimal control law of $(A,B,g,T)$ such that
\begin{equation}
\label{Eq:NSCDecomp2FH}
\rho_i (Ax+Bu^*(x,t)) = Ax_i + \rho_i \circ B u^*_{i}(x_i,t), \forall x \in \mathcal{X}, \forall i \in \mathcal{I}, \forall t \in \mathcal{T}.
\end{equation}
This relation can be expressed in the commutative diagram

\[\begindc{\commdiag}[40]
\obj(0,1)[x1]{$\mathcal{X}$}
\obj(2,1)[x2]{$\mathcal{X}$}
\obj(0,0)[xb1]{$\mathcal{X}_i$}
\obj(2,0)[xb2]{$\mathcal{X}_i$}
\mor{x1}{x2}{$u^*_t$}
\mor{x2}{xb2}{$\rho_i$}
\mor{x1}{xb1}{$\rho_i$}
\mor{xb1}{xb2}{$u^*_{i,t}$}
\enddc\]
\end{lemma}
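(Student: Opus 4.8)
The plan is to work with the finite-horizon cost-to-go functions through the Principle of Optimality (\ref{Eq:POfinite}), writing $J^*_{i,t}$ for the cost-to-go of the $i$-th subproblem of Problem \ref{Prob:3}, and to reduce the lemma to a comparison between $J^*_t$ and the sum $\sum_{i} J^*_{i,t}$. First I would simplify the commutative condition (\ref{Eq:NSCDecomp2FH}). Since $A\mathcal{X}_i \subseteq \mathcal{X}_i$ and $\sum_{i}\rho_i = \mathrm{id}$, projecting the original dynamics gives $\rho_i(Ax+Bu) = Ax_i + \rho_i \circ B u$; cancelling the common term $Ax_i$ shows that (\ref{Eq:NSCDecomp2FH}) is exactly the per-subspace identity $\rho_i \circ B u^*(x,t) = \rho_i \circ B u^*_{i}(x_i,t)$ for every $i$. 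Summing over $i$ and using $\sum_i\rho_i = \mathrm{id}$ turns this into $B u^*(x,t) = \sum_i \rho_i \circ B u^*_{i}(x_i,t)$, which is the reconstruction requirement (\ref{Eq:PiDecomp2Cond}) evaluated along a trajectory; conversely, since a sum $\sum_i v_i$ with $v_i \in \mathcal{X}_i$ is the unique direct-sum decomposition of its value, (\ref{Eq:PiDecomp2Cond}) forces the per-subspace identity back. Thus (\ref{Eq:NSCDecomp2FH}) and (\ref{Eq:PiDecomp2Cond}) are equivalent, modulo the standard correspondence between optimal open-loop policies and state-feedback control laws, which I would invoke explicitly. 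Consequently the necessity half of the lemma is immediate, and sufficiency reduces to deriving the value identity (\ref{Eq:JDecomp2Cond}) from (\ref{Eq:NSCDecomp2FH}).

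Next I would establish the one-sided inequality $J^*_t(x) \ge \sum_i J^*_{i,t}(x_i)$ for all $x$ and $t$, by backward induction on $t$. Equality at $t=T$ follows from $g \in \mathcal{G}_s$, and the inductive step combines (\ref{Eq:POfinite}), the projection identity $\rho_i(Ax+Bu) = Ax_i + \rho_i\circ Bu$, and the elementary bound $\min_u \sum_i f_i(u) \ge \sum_i \min_u f_i(u)$, which holds because a single common input is more constrained than independent minimizers. This inequality is unconditional and supplies the quantitative core: the decomposition of the value function can fail only through strict inequality, and (\ref{Eq:NSCDecomp2FH}) is precisely what rules this out.

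To finish sufficiency, assume (\ref{Eq:NSCDecomp2FH}): fix any optimal control laws $u^*_i$ of the subproblems and a matching optimal law $u^*$ of the original problem, run the original trajectory from $x_0 = x$, and project it. Evaluated along this trajectory, the per-subspace identity shows that $\rho_i(x_t)$ evolves exactly as the $i$-th subproblem driven by $u^*_i$, so the projected trajectory is itself optimal for subproblem $i$; splitting the stage costs via $g \in \mathcal{G}_s$ then yields $J^*(x) = \sum_i J^*_i(x_i)$, which is (\ref{Eq:JDecomp2Cond}). Together with (\ref{Eq:PiDecomp2Cond}) this gives Definition \ref{Def:Decomp2}. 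For necessity, a decomposition supplies (\ref{Eq:PiDecomp2Cond}) by definition, and the first paragraph converts it into (\ref{Eq:NSCDecomp2FH}). I expect the main obstacle to lie in the bookkeeping around the ``for any choice of optimal control laws'' quantifier together with the policy/control-law translation: one must verify that the existential statement in (\ref{Eq:NSCDecomp2FH}) lines up with the ``for any choice $\ldots$ there exists'' structure of Definition \ref{Def:Decomp2}, and that the projected trajectory genuinely coincides with a subproblem-optimal trajectory rather than merely an admissible one.
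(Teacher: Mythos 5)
Your proposal is correct and takes essentially the same route as the paper: your first paragraph reproduces the paper's Proposition \ref{Prop:EquivCondsFH} (cancelling $Ax_i$ to show (\ref{Eq:NSCDecomp2FH}) is equivalent to the per-subspace form of (\ref{Eq:PiDecomp2Cond}), modulo the policy/control-law correspondence), and your trajectory-projection argument with $g \in \mathcal{G}_s$ is exactly the paper's computation establishing (\ref{Eq:NSCDecomp2FH}) $\Rightarrow$ (\ref{Eq:JDecomp2Cond}). The only difference is your unconditional inequality $J^*_t(x) \ge \sum_{i \in \mathcal{I}} J^*_{i,t}(x_i)$, which is valid but redundant, since your final paragraph obtains the equality directly.
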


\begin{lemma}
\label{Lemma:CondDecomp2IH}
Consider system (\ref{Eq:DTsystem1}),
a decomposition of $\mathcal{X}$ over $A$,
and a cost function (\ref{Eq:CostFunction}) satisfying $g \in \mathcal{G}_s$. 
$\big\{ (A,\rho_i \circ B ,g,\alpha) \big\}_{ i \in \mathcal{I}}$ 
is a decomposition of $(A,B,g,\alpha)$ iff
for any choice of optimal control laws for $\big\{ (A,\rho_i \circ B ,g,\alpha) \big\}$, $i \in \mathcal{I}$,
there exists an optimal control law for $(A,B,g,\alpha)$ such that
\begin{equation}
\label{Eq:NSCDecomp2IH}
\rho_i (Ax+Bu^*(x)) = Ax_i + \rho_i \circ B u^*_{i}(x_i), \forall x \in \mathcal{X}, \forall i \in \mathcal{I}.
\end{equation}
This relation can be expressed in the commutative diagram

\[\begindc{\commdiag}[40]
\obj(0,1)[x1]{$\mathcal{X}$}
\obj(2,1)[x2]{$\mathcal{X}$}
\obj(0,0)[xb1]{$\mathcal{X}_i$}
\obj(2,0)[xb2]{$\mathcal{X}_i$}
\mor{x1}{x2}{$u^*$}
\mor{x2}{xb2}{$\rho_i$}
\mor{x1}{xb1}{$\rho_i$}
\mor{xb1}{xb2}{$u^*_{i}$}
\enddc\]
\end{lemma}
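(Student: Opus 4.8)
The plan is to reduce the abstract decomposition in Definition \ref{Def:Decomp2} to the single-stage, pointwise statement (\ref{Eq:NSCDecomp2IH}), and to bridge the per-stage recursion underlying the finite-horizon Lemma \ref{Lemma:CondDecomp2FH} to the infinite-horizon fixed-point setting. First I would record two elementary facts. Since each $\mathcal{X}_i$ is $A$-invariant by (\ref{Eq:Xinvariance}) and $\sum_{i} \rho_i = \mathrm{id}_{\mathcal{X}}$, the projections commute with $A$, i.e. $\rho_i \circ A = A \circ \rho_i$; consequently, if the original system is driven from $x$ by a control sequence $u$, then $\rho_i(x_t)$ is exactly the trajectory of the $i$th subsystem (\ref{Eq:DTsystem3}) driven by the same $u$, and the cost splits as $J(x,u)=\sum_i J_i(\rho_i(x),u)$ because $g \in \mathcal{G}_s$. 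Since forcing a common $u$ over-constrains the otherwise independent subproblems, $J^*(x) \ge \sum_i J_i^*(\rho_i(x))$ for every $x$. Second, subtracting $Ax_i$ from (\ref{Eq:NSCDecomp2IH}) and using $\rho_i \circ A = A \circ \rho_i$ shows (\ref{Eq:NSCDecomp2IH}) to be equivalent to $\rho_i \circ B u^*(x) = \rho_i \circ B u_i^*(x_i)$ for all $i$, hence, summing over $i$, to $B u^*(x) = \sum_i \rho_i \circ B u_i^*(x_i)$, which is the stationary form of (\ref{Eq:PiDecomp2Cond}). I would also note that, since we only ever extract or prescribe first actions, (\ref{Eq:NSCDecomp2IH}) may be read pointwise: for each $x$ and each selection of subproblem-optimal actions at the $\rho_i(x)$ there is an original-optimal action assembling them, obtained by extending each selection to a full optimal control law and applying the hypothesis at that single $x$.

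For the ``if'' direction I would first establish the value identity (\ref{Eq:JDecomp2Cond}). Set $\tilde{J}(x) := \sum_i J_i^*(\rho_i(x))$ and fix optimal control laws $u_i^*$ of the subproblems. Expanding each $J_i^*$ through its Bellman equation (\ref{Eq:POinfinite}) and using (\ref{Eq:NSCDecomp2IH}) to replace $Ax_i + \rho_i \circ B u_i^*(x_i)$ by $\rho_i(Ax + Bu^*(x))$ gives $\tilde{J}(x) = g(x) + \alpha\, \tilde{J}(Ax + Bu^*(x))$. Since for every $u$ one has $\tilde{J}(Ax+Bu) = \sum_i J_i^*(Ax_i + \rho_i\circ B u) \ge \sum_i J_i^*(Ax_i + \rho_i \circ B u_i^*(x_i))$, the control $u^*(x)$ attains $\min_u \tilde{J}(Ax+Bu)$, so $\tilde{J}$ solves the original Bellman equation (\ref{Eq:POinfinite}). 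Unrolling this identity along the trajectory generated by $u^*$ and letting the horizon grow, the discount $\alpha \in (0,1)$ forces the tail $\alpha^N \tilde{J}(x_N)$ to vanish, so the cost of the $u^*$-policy equals $\tilde{J}(x)$; hence $\tilde{J}(x) \ge J^*(x)$, which with the baseline inequality yields (\ref{Eq:JDecomp2Cond}).

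To complete ``if'' I would reconstruct the policy. Given arbitrary optimal subproblem policies $\pi_i^*$, I would build the original control inductively: maintaining $\rho_i(x_t)=x_{i,t}$, at each stage the prescribed actions $\pi_{i,t}^*(x_i)$ are optimal at $\rho_i(x_t)$, so the pointwise form of (\ref{Eq:NSCDecomp2IH}) supplies an original-optimal action $u_t^*$ with $\rho_i \circ B u_t^* = \rho_i \circ B \pi_{i,t}^*(x_i)$; this both preserves trajectory consistency and, since $\tilde{J}=J^*$, keeps the action greedy with respect to the true value function, so the assembled sequence is an optimal policy satisfying (\ref{Eq:PiDecomp2Cond}). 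For ``only if'' I would run this in reverse: starting from any optimal subproblem control laws, take the stationary policies they generate, feed them into Definition \ref{Def:Decomp2}, obtain a reconstructed optimal policy $\pi^*$, and set $u^*(x):=\pi_0^*(x)$. The principle of optimality (\ref{Eq:u*IH}) guarantees $u^*$ is an optimal control law, while evaluating (\ref{Eq:PiDecomp2Cond}) at $t=0$ over all initial states $x$ returns exactly $B u^*(x) = \sum_i \rho_i \circ B u_i^*(x_i)$, that is, (\ref{Eq:NSCDecomp2IH}).

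I expect the main obstacle to be the two infinite-horizon features absent from Lemma \ref{Lemma:CondDecomp2FH}: the passage from the per-stage recursion to the Bellman fixed point, and the quantification over arbitrary, possibly non-stationary, optimal policies. The first requires showing that $\tilde{J}$ is genuinely $J^*$ rather than merely a subsolution, which I would handle through the vanishing-tail argument above rather than a contraction estimate, since $g$ need not be bounded and $\mathcal{X}$ is defined over an arbitrary field; the finiteness of the optimal cost asserted in Problem \ref{Prob:1} is what legitimizes the tail argument. The second is dissolved by the pointwise reading of (\ref{Eq:NSCDecomp2IH}), which lets an arbitrary optimal policy be reassembled one stage at a time, so that phrasing the condition in terms of control laws entails no loss of generality.
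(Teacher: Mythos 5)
Your proposal is correct, and for the central implication it takes a genuinely different route from the paper. The paper factors the lemma into Proposition \ref{Prop:EquivCondsIH} (proved like Proposition \ref{Prop:EquivCondsFH}, by adding $Ax$ to (\ref{Eq:PiDecomp2Cond}) and applying $\rho_i$ --- exactly your algebraic observation that (\ref{Eq:NSCDecomp2IH}) is the stationary form of (\ref{Eq:PiDecomp2Cond})), which settles the equivalence between the policy-level clause of Definition \ref{Def:Decomp2} and the control-law condition; it then proves (\ref{Eq:NSCDecomp2IH}) $\Rightarrow$ (\ref{Eq:JDecomp2Cond}) by a direct termwise computation: fixing optimal control laws $u_i^*$ and the matching $u^*$, the projected trajectories coincide ($\rho_i(x_t) = x_{i,t}$, by induction from (\ref{Eq:NSCDecomp2IH})), so $g \in \mathcal{G}_s$ splits the discounted series and $J^*(x) = \sum_{i} J_i^*(x_i)$ falls out in one chain of equalities. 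You instead run a verification argument: the a priori inequality $J^*(x) \ge \sum_i J_i^*(\rho_i(x))$ coming from the common-input coupling (which the paper never states, though it is the structural reason the family of Problem \ref{Prob:3} is natural), plus the fact that $\tilde{J} = \sum_i J_i^* \circ \rho_i$ satisfies the Bellman equation (\ref{Eq:POinfinite}) with $u^*$ greedy. Your route is longer but buys the one-sided bound and the greedy characterization as byproducts, and it makes explicit the bookkeeping for arbitrary non-stationary optimal policies (your pointwise reassembly of first actions), which the paper handles only informally inside Proposition \ref{Prop:EquivCondsFH}; your ``only if'' direction (stationary policies generated by the $u_i^*$, extract the $t=0$ action, invoke (\ref{Eq:u*IH})) matches the paper's omitted argument.

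One small repair: you neither have nor need the claim that $\alpha^N \tilde{J}(x_N) \to 0$. Since $g$ may be unbounded along trajectories, the vanishing tail is not immediate from $\alpha \in (0,1)$ alone. But nonnegativity of $\tilde{J}$ already gives $\tilde{J}(x) = \sum_{t < N} \alpha^t g(x_t) + \alpha^N \tilde{J}(x_N) \ge \sum_{t< N} \alpha^t g(x_t)$, hence in the limit $\tilde{J}(x) \ge J(x,\pi_{u^*}) \ge J^*(x)$, which combined with your baseline inequality yields (\ref{Eq:JDecomp2Cond}); the vanishing of the tail then follows a posteriori rather than serving as an input. The same remark legitimizes your final assembly step (all actions greedy with respect to $J^*$ implies optimality): unrolling $J^*$ along the assembled trajectory and dropping the nonnegative tail gives $J^*(x) \ge J(x,\pi^*) \ge J^*(x)$ without any boundedness hypothesis.
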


Finally, we establish the following hierarchy between the two proposed decompositions:

\begin{thm} 
\label{Thm:2implies1}
Consider system (\ref{Eq:DTsystem1}),
a decomposition of $\mathcal{X}$ over $A$,
and a cost function (\ref{Eq:CostFunction}) satisfying $g \in \mathcal{G}_s$. 
If $\big\{ (A,\rho_i \circ B,g,T) \big\}_{ i \in \mathcal{I}}$ is a decomposition of $(A,B,g,T)$, 
then $\big\{ (A,B|_{\mathcal{E}_i},g,T) \big\}_{ i \in \mathcal{I}}$ is also a decomposition of $(A,B,g,T)$.
Likewise, if $\big\{ (A,\rho_i \circ B,g,\alpha) \big\}_{ i \in \mathcal{I}}$ is a decomposition of $(A,B,g,\alpha)$, 
then $\big\{ (A,B|_{\mathcal{E}_i},g,\alpha) \big\}_{ i \in \mathcal{I}}$ is also a decomposition of $(A,B,g,\alpha)$.
\end{thm}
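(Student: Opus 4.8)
The plan is to show that the assumed decomposition of the family in Problem~\ref{Prob:3} forces the two families of subproblems to have \emph{identical} optimal costs, i.e. $\overline{J}_i^*(x_i)=J_i^*(x_i)$ for every $i\in\mathcal{I}$ and every $x_i\in\mathcal{X}_i$, after which both requirements of Definition~\ref{Def:Decomp1} follow quickly. I would first record three facts that hold unconditionally. (i) For the $i$-th subproblem of Problem~\ref{Prob:2}, any admissible control $\bar u_{i,t}\in\mathcal{E}_i$ obeys $\rho_i\circ B\,\bar u_{i,t}=B\bar u_{i,t}$ (since $B\bar u_{i,t}\in\mathcal{X}_i$), so it is also admissible for the $i$-th subproblem of Problem~\ref{Prob:3} and yields the same trajectory and cost; hence $J_i^*(x_i)\le\overline{J}_i^*(x_i)$. (ii) Starting any subproblem at the origin and applying the zero control keeps the state at $0$ at zero cost, so $J_i^*(0)=\overline{J}_i^*(0)=0$, and there exists an optimal control law of the $i$-th subproblem of Problem~\ref{Prob:3} whose value at state $0$ is $0$. (iii) Given optimal policies $\overline{\pi}_i^*$ with control laws $\overline{u}_i^*$ for the subproblems of Problem~\ref{Prob:2}, the composite control $u_t=\sum_{i\in\mathcal{I}}\overline{u}_i^*(\rho_i(x_t),t)$ drives $\rho_i(x_t)$ exactly along the $i$-th subproblem trajectory, because $B\overline{u}_i^*\in\mathcal{X}_i$ and $A$ preserves the decomposition; consequently $g(x_t)=\sum_i g(\rho_i(x_t))$ and this composite policy achieves cost $\sum_{i}\overline{J}_i^*(\rho_i(x_0))$.

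The heart of the argument is the reverse inequality $\overline{J}_i^*(x_i)\le J_i^*(x_i)$, and this is where I would invoke the hypothesis through the characterization of Lemma~\ref{Lemma:CondDecomp2FH}. Fix $j\in\mathcal{I}$ and an initial state $x\in\mathcal{X}_j$, so that $\rho_i(x)=0$ for $i\ne j$. Choosing, by (ii), optimal control laws $u_i^*$ of the Problem~\ref{Prob:3} subproblems with $u_i^*(0,\cdot)=0$ for $i\ne j$, the commutative-diagram identity $\rho_i(Ax+Bu^*(x,t))=Ax_i+\rho_i\circ B\,u_i^*(x_i,t)$ supplied by the lemma forces $\rho_i(Bu^*(x,t))=0$ for every $i\ne j$ whenever $x\in\mathcal{X}_j$. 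Hence $Bu^*(x,t)\in\mathcal{R}(B)\cap\mathcal{X}_j$, i.e. $u^*(x,t)\in\mathcal{E}_j$, for all $x\in\mathcal{X}_j$. Since $A\mathcal{X}_j\subseteq\mathcal{X}_j$, a straightforward induction then shows that the optimal trajectory of $(A,B,g,T)$ issuing from $x\in\mathcal{X}_j$ under this $u^*$ never leaves $\mathcal{X}_j$ and uses controls in $\mathcal{E}_j$ throughout; it is therefore a \emph{feasible} policy for the $j$-th subproblem of Problem~\ref{Prob:2}, whose cost is exactly $J^*(x)$. This gives $\overline{J}_j^*(x)\le J^*(x)$.

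To close the chain I would use the cost condition (\ref{Eq:JDecomp2Cond}) of the assumed decomposition, $J^*(x)=\sum_i J_i^*(\rho_i(x))$, evaluated at $x\in\mathcal{X}_j$; together with $J_i^*(0)=0$ this yields $J^*(x)=J_j^*(x)$. Combining with fact (i) and the inequality just obtained gives $J_j^*(x)\le\overline{J}_j^*(x)\le J^*(x)=J_j^*(x)$, so $\overline{J}_j^*(x)=J_j^*(x)$ for all $x\in\mathcal{X}_j$ and all $j\in\mathcal{I}$. The cost condition (\ref{Eq:JDecomp1Cond}) of Definition~\ref{Def:Decomp1} then follows at once, since $\sum_i\overline{J}_i^*(\rho_i(x))=\sum_i J_i^*(\rho_i(x))=J^*(x)$. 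For the policy condition (\ref{Eq:PiDecomp1Cond}), I would take the composite control of fact (iii): its cost equals $\sum_i\overline{J}_i^*(\rho_i(x_0))=J^*(x_0)$, hence it is optimal, and by construction it equals $\sum_{i\in\mathcal{I}}\overline{\pi}_i^*(\rho_i(x))$ componentwise in time.

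The infinite-horizon statement is handled identically, replacing time-varying control laws by stationary feedback laws and Lemma~\ref{Lemma:CondDecomp2FH} by Lemma~\ref{Lemma:CondDecomp2IH}; the only extra care is in fact (iii), where interchanging the nonnegative, $\alpha$-discounted sums over $t$ and $i$ is justified by Tonelli's theorem. I expect the main obstacle to be precisely the reduction $\overline{J}_i^*\le J_i^*$: one might hope that equality of the two families' optimal costs is automatic, but it genuinely requires the \emph{reconstruction} part of the Problem~\ref{Prob:3} decomposition — namely that, for initial states inside a single $\mathcal{X}_j$, the reconstructed original optimal control keeps $Bu^*$ inside $\mathcal{R}(B)\cap\mathcal{X}_j$, so that the extra control freedom enjoyed by Problem~\ref{Prob:3} (effective inputs in $\rho_j(\mathcal{R}(B))$ rather than in $\mathcal{R}(B)\cap\mathcal{X}_j$) confers no advantage.
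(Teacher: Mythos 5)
Your proof is correct, and it takes a genuinely different route from the paper's. The paper never compares the two families of subproblems to each other: it takes the reconstruction condition (\ref{Eq:PiDecomp2Cond}), specializes it to initial states $z\in\mathcal{X}_i$, and invokes Proposition \ref{Prop:Pi0} (with $\rho_j\circ B$ in place of $B$) to kill the terms $\rho_j\circ B\,\pi^*_{j,t}(0)$, concluding that the reconstructed optimal controls of $(A,B,g,T)$ take values in $\sum_{i\in\mathcal{I}}\mathcal{E}_i$; this verifies the argmin condition (\ref{Eq:NSCDecomp1FH}) (resp.\ (\ref{Eq:NSCDecomp1IH})), and Lemmas \ref{Lemma:CondDecomp1FH} and \ref{Lemma:CondDecomp1IH} then carry the rest of the load. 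You instead establish the sharper intermediate fact $\overline{J}_j^*=J_j^*$ on each $\mathcal{X}_j$ --- via the commutative-diagram characterization of Lemmas \ref{Lemma:CondDecomp2FH}/\ref{Lemma:CondDecomp2IH} applied to subproblem laws normalized so that $u_i^*(0,\cdot)=0$ for $i\neq j$, together with the trapping argument that the reconstructed trajectory from $x\in\mathcal{X}_j$ stays in $\mathcal{X}_j$ with controls in $\mathcal{E}_j$ --- and then verify both clauses of Definition \ref{Def:Decomp1} directly, bypassing Lemmas \ref{Lemma:CondDecomp1FH}/\ref{Lemma:CondDecomp1IH} entirely. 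Two remarks: your normalization $u_i^*(0,\cdot)=0$ is legitimate (it is a valid ``choice'' since $0\in\argmin_{u}J^*_{i,t+1}(\rho_i\circ B u)$, as $J^*_{i,t+1}\geq 0=J^*_{i,t+1}(0)$) but in fact unnecessary --- the argument behind Proposition \ref{Prop:Pi0} shows \emph{every} optimal law of the $i$th subproblem satisfies $\rho_i\circ B\,u_i^*(0,t)=0$, which is exactly what the paper exploits; and in your fact (iii), to match Definition \ref{Def:Decomp1} verbatim you should sum the given open-loop optimal policies $\overline{\pi}_i^*(x_i)$ rather than control laws (an arbitrary optimal policy need not be induced by a law), though the trajectory-splitting computation is unchanged. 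What each approach buys: the paper's proof is very short given its earlier machinery, while yours is self-contained modulo Lemmas \ref{Lemma:CondDecomp2FH}/\ref{Lemma:CondDecomp2IH} and yields strictly more information --- under the Problem \ref{Prob:3} decomposition hypothesis, the two families of subproblems have identical optimal costs, $J^*|_{\mathcal{X}_i}=\overline{J}_i^*=J_i^*$, so the extra control freedom of the $\rho_i\circ B$ subproblems confers no advantage, which makes the hierarchy of Theorem \ref{Thm:2implies1} transparent rather than merely true.
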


The converse statement in Theorem \ref{Thm:2implies1} does not necessarily hold, 
as we will see in Section \ref{Sec:Examples}.

\color{black}
\section{Derivation of Results}
\label{Sec:Derivation}

We begin by establishing some facts that will be helpful in our derivations:

\begin{prop} 
\label{lem:min-CDC}
Consider a system (\ref{Eq:DTsystem1}), a cost function (\ref{Eq:CostFunction}), 
and a decomposition of $\mathcal{X}$ over $A$.
Let $g \in \mathcal{G}_s$. 
Then for any $x_i \in \mathcal{X}_i$, we have
$\displaystyle \min_{u \in \mathcal{E}_i} g(Ax_i + Bu)=\min_{u \in \sum_{\j}\mathcal{E}_j} g(Ax_i + Bu)$.
\end{prop}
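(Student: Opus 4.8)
The plan is to prove the two inequalities separately: the inequality $\le$ is immediate, and the reverse inequality $\ge$ is where the structure of the problem enters. Since $\mathcal{E}_i \subseteq \sum_{j \in \mathcal{I}} \mathcal{E}_j$, the minimization on the right is performed over a larger feasible set, and hence
\[
\min_{u \in \sum_{j} \mathcal{E}_j} g(Ax_i + Bu) \;\le\; \min_{u \in \mathcal{E}_i} g(Ax_i + Bu).
\]

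For the reverse inequality, I would fix an arbitrary $u \in \sum_{j} \mathcal{E}_j$ and write $u = \sum_{j \in \mathcal{I}} u_j$ with $u_j \in \mathcal{E}_j$. By the definition of $\mathcal{E}_j$, each $Bu_j$ lies in $\mathcal{X}_j$, so $Bu = \sum_{j} Bu_j$ is exactly the decomposition of $Bu$ across the $\mathcal{X}_j$; consequently $\rho_j(Bu) = Bu_j$. Using $A$-invariance (\ref{Eq:Xinvariance}) we have $Ax_i \in \mathcal{X}_i$, so the components of $Ax_i + Bu$ are $\rho_i(Ax_i + Bu) = Ax_i + Bu_i$ and $\rho_j(Ax_i + Bu) = Bu_j$ for $j \neq i$. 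Invoking the additive structure (\ref{Eq:gDecomposition}) of $g \in \mathcal{G}_s$ then gives
\[
g(Ax_i + Bu) = g(Ax_i + Bu_i) + \sum_{j \neq i} g(Bu_j).
\]

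The key step, and the only place where anything beyond bookkeeping is used, is to discard the off-diagonal terms: since $g$ is nonnegative by (\ref{Eq:CostFunction}), each $g(Bu_j) \ge 0$, whence $g(Ax_i + Bu) \ge g(Ax_i + Bu_i)$. As $u_i \in \mathcal{E}_i$ is feasible for the left-hand problem, $g(Ax_i + Bu_i) \ge \min_{v \in \mathcal{E}_i} g(Ax_i + Bv)$, and since $u$ was arbitrary, taking the infimum over all such $u$ yields the reverse inequality $\min_{u \in \sum_j \mathcal{E}_j} g(Ax_i + Bu) \ge \min_{v \in \mathcal{E}_i} g(Ax_i + Bv)$. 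Combining the two inequalities gives equality. I do not anticipate a genuine obstacle: the result is a direct consequence of the compatibility between the additive cost decomposition and the invariant-subspace decomposition, with the nonnegativity of $g$ ensuring that diverting control effort into the subspaces $\mathcal{X}_j$, $j \neq i$, can only raise the cost when the state starts in $\mathcal{X}_i$.
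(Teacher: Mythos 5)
Your proof is correct and follows essentially the same route as the paper's: decompose $u=\sum_j u_j$ with $u_j\in\mathcal{E}_j$, use $A$-invariance and the additive structure $g\in\mathcal{G}_s$ to split $g(Ax_i+Bu)=g(Ax_i+Bu_i)+\sum_{j\neq i}g(Bu_j)$, and exploit nonnegativity of $g$. The only (cosmetic) difference is organizational: the paper writes one chain of equalities, interchanging the minimization with the sum and killing the terms $\min_{u_j\in\mathcal{E}_j}g(Bu_j)$ by choosing $u_j=0$ with $g(0)=0$, whereas you argue the two inequalities separately, with the easy direction following from the inclusion $\mathcal{E}_i\subseteq\sum_j\mathcal{E}_j$.
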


\begin{proof} 
We have
\begin{eqnarray*}
\min_{u \in \sum_{\j}\mathcal{E}_j} g(Ax_i + Bu)
& = &
\min_{u_j \in \mathcal{E}_j, \, j \in \mathcal{I}} g\left(Ax_i + B \left(\sum_{j \in \mathcal{I}} u_j\right) \right) \\
& = &
\min_{u_j \in \mathcal{E}_j, \, j \in \mathcal{I}}   \left[g(Ax_i+Bu_i)+\sum_{j \neq i}g\left( Bu_j \right) \right] \\
& = &
\min_{u_i \in \mathcal{E}_i} g(Ax_i+Bu_i) + \sum_{j \neq i} \min_{u_j \in \mathcal{E}_j} g\left(Bu_j \right) \\
& = & 
\min_{u_i \in \mathcal{E}_i} g(Ax_i+Bu_i)
\end{eqnarray*}
where the second equality follows from the assumption $g \in \mathcal{G}_s$,
the third equality follows from the fact that each $u_i$ independently affects one term of the summation,
and the fourth equality follows by noting that $g$ is non-negative and $g(0)=0$,
and by selecting $u_j = 0$ for $j \neq i$.
\end{proof} 

\begin{prop}
\label{Prop:Lemma1-1}
Consider a system (\ref{Eq:DTsystem1}), a cost function (\ref{Eq:CostFunction}), a decomposition of $\mathcal{X}$ over $A$,
and a DP problem $(A,B,g,T)$.
Let $g \in \mathcal{G}_s$, and assume that (\ref{Eq:NSCDecomp1FH}) holds for all $x \in \mathcal{X}$, $t \in \mathcal{T}$.
We have $J^*_t \in \mathcal{G}_s$, $\forall t \in \mathcal{T}$.
\end{prop}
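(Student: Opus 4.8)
The plan is a backward induction on the time index $t$, descending from the terminal time $t=T$ to $t=0$, with induction hypothesis $J^*_{t+1}\in\mathcal{G}_s$. Before starting I would record one auxiliary fact that gets used repeatedly: every cost-to-go function vanishes at the origin, $J^*_\tau(0)=0$. This is immediate, since the trajectory launched from $x=0$ under the zero control stays at the origin and accrues cost $\sum g(0)=0$, while $J^*_\tau\ge 0$. The base of the induction is the terminal step, where $J^*_T=g\in\mathcal{G}_s$ by hypothesis.

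For the inductive step, fix $t\in\mathcal{T}$, assume $J^*_{t+1}\in\mathcal{G}_s$, take an arbitrary $x\in\mathcal{X}$, and write $x=\sum_{i\in\mathcal{I}}x_i$ with $x_i=\rho_i(x)$. Starting from the Principle of Optimality (\ref{Eq:POfinite}), the pivotal move is to invoke the standing hypothesis (\ref{Eq:NSCDecomp1FH}): since the argmin of $J^*_{t+1}(Ax+B\,\cdot\,)$ meets $\oplus_{i}\mathcal{E}_i$, the unconstrained minimization over $\mathcal{U}$ coincides with the minimization over $\oplus_{i}\mathcal{E}_i$. With $u$ ranging over $\oplus_i\mathcal{E}_i$, write $u=\sum_i \bar u_i$ with $\bar u_i\in\mathcal{E}_i$; the $A$-invariance (\ref{Eq:Xinvariance}) gives $Ax_i\in\mathcal{X}_i$, and the definition of $\mathcal{E}_i$ gives $B\bar u_i\in\mathcal{X}_i$, so that $Ax_i+B\bar u_i=\rho_i(Ax+Bu)$ is exactly the $i$-th block of $Ax+Bu$. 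Applying the inductive hypothesis $J^*_{t+1}\in\mathcal{G}_s$ and noting that each $\bar u_i$ affects only its own summand, the minimization decouples, and combining with $g(x)=\sum_i g(x_i)$ yields
\[
J^*_t(x)=\sum_{i\in\mathcal{I}}\left[\,g(x_i)+\min_{\bar u_i\in\mathcal{E}_i}J^*_{t+1}(Ax_i+B\bar u_i)\,\right].
\]

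The hard part will be identifying each bracketed term with $J^*_t(x_i)$, as this is the only place the argument is not purely bookkeeping. Applying the Principle of Optimality at the point $x_i$ and again using (\ref{Eq:NSCDecomp1FH}), the same splitting produces the in-block term $\min_{\bar u_i\in\mathcal{E}_i}J^*_{t+1}(Ax_i+B\bar u_i)$ together with cross terms $\min_{\bar u_j\in\mathcal{E}_j}J^*_{t+1}(B\bar u_j)$ for $j\neq i$, where $\rho_j(Ax_i)=0$ because $Ax_i\in\mathcal{X}_i$. Each cross term vanishes: choosing $\bar u_j=0$ gives $J^*_{t+1}(0)=0$ by the auxiliary fact, and $J^*_{t+1}\ge 0$ forbids anything smaller. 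Hence
\[
J^*_t(x_i)=g(x_i)+\min_{\bar u_i\in\mathcal{E}_i}J^*_{t+1}(Ax_i+B\bar u_i),
\]
so each bracket is precisely $J^*_t(x_i)$, giving $J^*_t(x)=\sum_i J^*_t(x_i)$, i.e. $J^*_t\in\mathcal{G}_s$, and closing the induction.

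To keep the write-up compact, I would factor both the decoupling of the minimization and the vanishing of the cross terms through the mechanism already recorded in Proposition \ref{lem:min-CDC}, whose proof applies verbatim with $J^*_{t+1}$ in place of $g$ once $J^*_{t+1}\in\mathcal{G}_s$, $J^*_{t+1}\ge 0$, and $J^*_{t+1}(0)=0$ are in hand. This isolates the genuinely new ingredient, the reduction from $\mathcal{U}$ to $\oplus_i\mathcal{E}_i$ supplied by (\ref{Eq:NSCDecomp1FH}), from the routine additivity manipulations.
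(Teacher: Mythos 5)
Your proposal is correct and follows essentially the same route as the paper's proof: a backward induction in which (\ref{Eq:NSCDecomp1FH}) reduces the minimization over $\mathcal{U}$ to $\oplus_{i\in\mathcal{I}}\mathcal{E}_i$, additivity of $g$ and of $J^*_{t+1}$ (inductive hypothesis) decouples the blocks, and the identification of each bracket with $J^*_t(x_i)$ is handled exactly as the paper does via Proposition~\ref{lem:min-CDC} together with a second application of (\ref{Eq:NSCDecomp1FH}) and (\ref{Eq:POfinite}). Your cross-term argument is precisely the content of Proposition~\ref{lem:min-CDC}'s proof, and your explicit remark that it applies with $J^*_{t+1}$ in place of $g$ (using $J^*_{t+1}\ge 0$, $J^*_{t+1}(0)=0$, $J^*_{t+1}\in\mathcal{G}_s$) makes transparent a substitution the paper performs only implicitly.
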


\begin{proof}
By backwards induction on $T$.
For $t=T$ we have $J_T^*=g$ by definition, and thus $J_T^* \in \mathcal{G}_s$.
Now assume $J_{t}^* \in \mathcal{G}_s$,
and consider $x \in \mathcal{X}$ with $x_i$ denoting its unique component in $\mathcal{X}_i$.  
We can write 
\begin{eqnarray*}
J_{t-1}^*(x) 
& = & g(x) + \min_{u \in \mathcal{U}} J_t^*(Ax+Bu) \\
& = & g(x)+  \min_{u \in \sum_{i \in \mathcal{I}} \mathcal{E}_i} J_t^*(Ax+Bu) \\
& = & g(x) + \min_{u_i \in \mathcal{E}_i, i \in \mathcal{I}} J_t^*\left(\sum_{i\in \mathcal{I}} \left(Ax_i+Bu_i\right)\right) \\
& = & \sum_{i \in \mathcal{I}} g(x_i) + \min_{u_i \in \mathcal{E}_i, i \in \mathcal{I}} \sum_{i \in \mathcal{I}} J_t^*\left(Ax_i+Bu_i\right) \\
& = & \sum_{i\in \mathcal{I}} \left[g(x_i)+ \min_{u_i \in \mathcal{E}_i} J_t^*\left(Ax_i+Bu_i\right)\right] \\
& = & \sum_{i\in \mathcal{I}} \left[g(x_i)+ \min_{u \in \sum_{j \in \mathcal{I}} \mathcal{E}_j} J_t^*\left(Ax_i+Bu\right)\right] \\
& = & \sum_{i \in \mathcal{I}} \left[g(x_i)+ \min_{u \in \mathcal{U}} J_t^*\left(Ax_i+Bu\right) \right] \\
& = & \sum_{i \in \mathcal{I}} J^*_{t-1}(x_i) \\
\end{eqnarray*}
where the first and last equality follow from (\ref{Eq:POfinite}), 
the second equality follows from the assumption $J_{t}^* \in \mathcal{G}_s$,
the third from the decomposition of $\mathcal{X}$,
the fourth from the assumption that  $g, J_{t}^* \in \mathcal{G}_s$,
the fifth from the observation that each $u_i$ only affects one term in the summation,
the sixth from Proposition \ref{lem:min-CDC},
and the seventh from (\ref{Eq:NSCDecomp1FH}).
Since the choice of $x$ was arbitrary, we conclude that $J^*_{t-1} \in \mathcal{G}_s$.
\end{proof}

\begin{cor}
\label{Cor:Lemma1-1}
Consider a system (\ref{Eq:DTsystem1}), a cost function (\ref{Eq:CostFunction}), a decomposition of $\mathcal{X}$ over $A$,
and a DP problem $(A,B,g,T)$.
Let $g \in \mathcal{G}_s$, and assume that (\ref{Eq:NSCDecomp1FH}) holds for all $x \in \mathcal{X}$, $t \in \mathcal{T}$.
We have $J^* \in \mathcal{G}_s$.
\end{cor}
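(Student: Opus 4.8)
The plan is to observe that Corollary \ref{Cor:Lemma1-1} follows immediately from Proposition \ref{Prop:Lemma1-1} once we recall how the optimal cost $J^*$ relates to the cost-to-go functions $J^*_t$. First I would invoke the definition of the optimal cost in Problem \ref{Prob:1} together with the definition of the cost-to-go function in (\ref{Eq:CostToGo}): the optimal cost satisfies $J^*(x) = J^*_0(x)$ for every $x \in \mathcal{X}$, exactly the identification stated in the discussion of the DP solution. This reduces the claim $J^* \in \mathcal{G}_s$ to the claim $J^*_0 \in \mathcal{G}_s$.

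Next I would note that in the finite horizon case $\mathcal{T} = \{0,1,\hdots,T-1\}$, and since $T>0$ is a positive integer, the set $\mathcal{T}$ is nonempty and contains the index $0$. Proposition \ref{Prop:Lemma1-1}, which holds under precisely the standing hypotheses of this corollary (namely $g \in \mathcal{G}_s$ and condition (\ref{Eq:NSCDecomp1FH}) for all $x \in \mathcal{X}$ and $t \in \mathcal{T}$), already establishes that $J^*_t \in \mathcal{G}_s$ for \emph{every} $t \in \mathcal{T}$. Specializing to $t=0$ gives $J^*_0 \in \mathcal{G}_s$, and combining this with the identity $J^* = J^*_0$ yields $J^* \in \mathcal{G}_s$.

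I do not anticipate any genuine obstacle: all of the substantive work, in particular the backward induction showing that the additive structure of $\mathcal{G}_s$ is preserved at each time step, is carried out in the proof of Proposition \ref{Prop:Lemma1-1}. The corollary is merely the extraction of the base-time instance $t=0$ of that result together with the standard identification of the optimal cost with the time-zero cost-to-go function. The only point worth stating explicitly is the nonemptiness of $\mathcal{T}$ and the membership $0 \in \mathcal{T}$, which guarantee that Proposition \ref{Prop:Lemma1-1} indeed applies at $t=0$.
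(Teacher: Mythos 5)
Your proposal is correct and is essentially identical to the paper's own proof, which likewise derives the corollary from Proposition \ref{Prop:Lemma1-1} via the identity $J^*(x) = J_0^*(x)$ for all $x \in \mathcal{X}$. Your additional remark that $0 \in \mathcal{T}$ (so the proposition applies at $t=0$) is a harmless elaboration of the same argument.
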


\begin{proof}
Follows immediately from Proposition \ref{Prop:Lemma1-1} by noting that $J^*(x) = J_0^*(x)$, $\forall x \in \mathcal{X}$.
\end{proof}

\begin{prop}
\label{Prop:Lemma1-2}
Consider a system (\ref{Eq:DTsystem1}), a cost function (\ref{Eq:CostFunction}), a decomposition of $\mathcal{X}$ over $A$,
a DP problem $(A,B,g,T)$, and a family of DP problems $\big\{ (A,B|_{\mathcal{E}_i},g,T) \big\}_{ i \in \mathcal{I}}$.
Let $g \in \mathcal{G}_s$, and assume that (\ref{Eq:NSCDecomp1FH}) holds for all $x \in \mathcal{X}$, $t \in \mathcal{T}$.
We have $J^*_t|_{\mathcal{X}_i}=\bar{J}^*_{i,t}$, $\forall t \in \mathcal{T}$, $\forall i \in \mathcal{I}$.
\end{prop}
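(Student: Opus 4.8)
The plan is to prove the identity by backward induction on $t$, exploiting the fact that once the state lies in $\mathcal{X}_i$ and the control is drawn from $\mathcal{E}_i$, the trajectories of $(A,B,g,T)$ and of the $i^{th}$ subproblem $(A,B|_{\mathcal{E}_i},g,T)$ coincide, so the two cost-to-go functions obey the same recursion. Throughout I would use that, under the standing hypotheses, Proposition \ref{Prop:Lemma1-1} guarantees $J_t^* \in \mathcal{G}_s$ for every $t \in \mathcal{T}$; this is what lets me invoke Proposition \ref{lem:min-CDC} with $J_t^*$ playing the role of $g$, since $J_t^*$ is also non-negative and vanishes at the origin. The base case $t=T$ is immediate, as both $J_T^*|_{\mathcal{X}_i}$ and $\bar{J}_{i,T}^*$ equal $g$ on $\mathcal{X}_i$.

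For the inductive step I would fix $x_i \in \mathcal{X}_i$ and compute $J_{t-1}^*(x_i)$ starting from the principle of optimality (\ref{Eq:POfinite}). The key sublemma I need is
\[
J_{t-1}^*(x_i) = g(x_i) + \min_{u \in \mathcal{E}_i} J_t^*(Ax_i + Bu),
\]
i.e.\ that when the current state is purely in $\mathcal{X}_i$ the minimization over all of $\mathcal{U}$ collapses to a minimization over $\mathcal{E}_i$ alone. This is obtained in two moves: first, (\ref{Eq:NSCDecomp1FH}) applied at the point $x_i$ lets me replace $\min_{u\in\mathcal{U}}$ by $\min_{u\in\oplus_{j\in\mathcal{I}}\mathcal{E}_j}$; then Proposition \ref{lem:min-CDC}, applied to $J_t^*\in\mathcal{G}_s$, collapses $\min_{u\in\sum_{j\in\mathcal{I}}\mathcal{E}_j}$ down to $\min_{u\in\mathcal{E}_i}$. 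These are exactly the manipulations already carried out term-by-term in the proof of Proposition \ref{Prop:Lemma1-1}, so I can alternatively quote that computation directly.

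With the sublemma in hand I would close the induction by noting that for $u\in\mathcal{E}_i$ we have $Bu\in\mathcal{X}_i$ and $Ax_i\in\mathcal{X}_i$ by $A$-invariance, so $Ax_i+Bu\in\mathcal{X}_i$; the inductive hypothesis $J_t^*|_{\mathcal{X}_i}=\bar{J}_{i,t}^*$ then gives $J_t^*(Ax_i+Bu)=\bar{J}_{i,t}^*(Ax_i+Bu)$. Substituting,
\[
J_{t-1}^*(x_i) = g(x_i) + \min_{u\in\mathcal{E}_i}\bar{J}_{i,t}^*(Ax_i+Bu) = \bar{J}_{i,t-1}^*(x_i),
\]
the last equality being precisely the principle of optimality for the $i^{th}$ subproblem of Problem \ref{Prob:2}, whose dynamics (\ref{Eq:DTsystem2}) and admissible inputs $\mathcal{E}_i$ are exactly those appearing here. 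Since $x_i\in\mathcal{X}_i$ and $i\in\mathcal{I}$ were arbitrary, this yields $J_{t-1}^*|_{\mathcal{X}_i}=\bar{J}_{i,t-1}^*$ and completes the induction.

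The main obstacle is the sublemma, and more specifically the step that drops the minimization from $\mathcal{U}$ to $\oplus_{j\in\mathcal{I}}\mathcal{E}_j$: this is where hypothesis (\ref{Eq:NSCDecomp1FH}) is essential, since in general $\rho_i(\mathcal{R}(B))$ may be strictly larger than $\mathcal{R}(B)\cap\mathcal{X}_i = B\mathcal{E}_i$, so an unrestricted control could in principle steer the $i^{th}$ coordinate to a cheaper value than any $\mathcal{E}_i$-control can. Verifying that this cannot happen at an optimum --- which is what (\ref{Eq:NSCDecomp1FH}) encodes --- together with the book-keeping of which function ($g$ versus $J_t^*$) is fed into Proposition \ref{lem:min-CDC}, is the only delicate part; the remainder is routine.
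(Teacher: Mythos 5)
Your proof is correct and follows essentially the same route as the paper's: backwards induction with base case $t=T$, replacement of $\min_{u\in\mathcal{U}}$ by the minimum over $\oplus_{j\in\mathcal{I}}\mathcal{E}_j$ via (\ref{Eq:NSCDecomp1FH}), collapse to $\min_{u\in\mathcal{E}_i}$ via Proposition \ref{lem:min-CDC} applied to $J_t^*\in\mathcal{G}_s$, and then the induction hypothesis (valid since $Ax_i+Bu\in\mathcal{X}_i$) together with the subproblem's principle of optimality. If anything, your explicit appeal to Proposition \ref{Prop:Lemma1-1} --- including the check that $J_t^*$ is non-negative and vanishes at the origin, which Proposition \ref{lem:min-CDC} needs --- tidies up the paper's own justification of the third equality, which cites Proposition \ref{Prop:Lemma1-2} itself there, evidently a typo for Proposition \ref{Prop:Lemma1-1}.
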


\begin{proof}
By backwards induction on $t$.
For $t=T$, it follows from the definitions that $J_t^*(x_i) = g(x_i)= \overline{J}_{i,t}^*(x_i)$.
Now assume $J^*_t|_{\mathcal{X}_i}=\bar{J}^*_{i,t}$. 
We can write 
\begin{eqnarray*}
J^*_{t-1}(x_i)
& = & g(x_i) + \min_{u \in \mathcal{U}} J^*_{t}(Ax_i+Bu) \\
& = & g(x_i) + \min_{u \in \sum_{j \in \mathcal{I}} \mathcal{E}_j} J^*_{t}(Ax_i+Bu) \\
& = & g(x_i) + \min_{u_i \in \mathcal{E}_i} J^*_{t}(Ax_i+Bu_i) \\
& = & g(x_i) + \min_{u_i \in \mathcal{E}_i} \bar{J}^*_{i,t}(Ax_i+B_iu_i) \\
& = &  \bar{J}_{i,t-1}^*(x_i).
\end{eqnarray*} 
where the first and last equality follow from (\ref{Eq:POfinite}),
the second equality follows from (\ref{Eq:NSCDecomp1FH}),
the third from Propositions \ref{lem:min-CDC} and \ref{Prop:Lemma1-2},
and the fourth by our assumption.
The proof is completed by noting that the choices of $i$ and $x_i$ were arbitrary.
\end{proof}

\begin{cor}
\label{Cor:Lemma1-2}
Consider a system (\ref{Eq:DTsystem1}), a cost function (\ref{Eq:CostFunction}), a decomposition of $\mathcal{X}$ over $A$,
a DP problem $(A,B,g,T)$, and a family of DP problems $\big\{ (A,B|_{\mathcal{E}_i},g,T) \big\}_{ i \in \mathcal{I}}$.
Let $g \in \mathcal{G}_s$, and assume that (\ref{Eq:NSCDecomp1FH}) holds for all $x \in \mathcal{X}$, $t \in \mathcal{T}$.
We have $J^*|_{\mathcal{X}_i}=\bar{J}^*_{i}$.
\end{cor}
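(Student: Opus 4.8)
The plan is to recognize that this corollary is an immediate specialization of Proposition \ref{Prop:Lemma1-2} to the initial time index $t=0$. First I would recall that, under exactly the hypotheses assumed here (namely $g \in \mathcal{G}_s$ together with the validity of (\ref{Eq:NSCDecomp1FH}) for all $x \in \mathcal{X}$ and $t \in \mathcal{T}$), Proposition \ref{Prop:Lemma1-2} has already established the stronger, pointwise-in-time identity $J^*_t|_{\mathcal{X}_i} = \bar{J}^*_{i,t}$ for every $t \in \mathcal{T}$ and every $i \in \mathcal{I}$. The corollary asks only for the time-zero instance of this family of identities, so the bulk of the work is already done.

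Next I would invoke the definitions linking the cost-to-go functions at $t=0$ to the optimal costs. By the discussion following the DP algorithm in Section \ref{Sec:Setup}, the optimal cost of $(A,B,g,T)$ satisfies $J^*(x) = J_0^*(x)$ for all $x \in \mathcal{X}$, and by the same reasoning applied to each subproblem the optimal cost of the $i$-th subproblem of Problem \ref{Prob:2} satisfies $\bar{J}^*_i(x_i) = \bar{J}^*_{i,0}(x_i)$ for all $x_i \in \mathcal{X}_i$. Setting $t=0$ in the identity from Proposition \ref{Prop:Lemma1-2} then gives, for each $i \in \mathcal{I}$, the chain $J^*|_{\mathcal{X}_i} = J_0^*|_{\mathcal{X}_i} = \bar{J}^*_{i,0} = \bar{J}^*_i$, which is precisely the asserted equality.

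I do not expect any substantive obstacle. The entire inductive argument, namely the backwards induction on $t$ and the repeated appeals to Proposition \ref{lem:min-CDC} and to condition (\ref{Eq:NSCDecomp1FH}), has already been carried out inside the proof of Proposition \ref{Prop:Lemma1-2}; nothing beyond that needs to be re-derived. The only point requiring mild care is the index bookkeeping, that is, correctly matching the time-zero cost-to-go functions with the optimal costs on both the original and the subproblem sides, which is purely a matter of unwinding the notation introduced in Sections \ref{Sec:Setup} and \ref{Sec:Decompositions}. This mirrors the way Corollary \ref{Cor:Lemma1-1} was deduced from Proposition \ref{Prop:Lemma1-1}, so the proof will be a single short paragraph.
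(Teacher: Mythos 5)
Your proposal is correct and is essentially identical to the paper's own proof: the paper likewise deduces the corollary immediately from Proposition \ref{Prop:Lemma1-2} by noting that $J^*(x) = J_0^*(x)$ and that the subproblem optimal costs equal their time-zero cost-to-go functions. Your extra remarks on index bookkeeping are sound but add nothing beyond the paper's one-line argument.
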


\begin{proof}
Follows immediately from Proposition \ref{Prop:Lemma1-2} by noting that $J^*(x) = J_0^*(x)$ and $J_i^*(x_i) = J_{i,0}^*(x_i)$.
\end{proof}

We are now ready to prove Lemmas \ref{Lemma:CondDecomp1FH} and \ref{Lemma:CondDecomp1IH}:

\begin{proof}[Proof of Lemma \ref{Lemma:CondDecomp1FH}]
Assume that $\big\{ (A,B|_{\mathcal{E}_i},g,T) \big\}_{ i \in \mathcal{I}}$ is a decomposition of $(A,B,g,T)$.
Then (\ref{Eq:NSCDecomp1FH}) follows immediately from 
(\ref{Eq:POfinite}) and (\ref{Eq:PiDecomp1Cond}).

Conversely, assume that (\ref{Eq:NSCDecomp1FH}) holds for all $x \in \mathcal{X}$, $t \in \mathcal{T}$.
From corollaries \ref{Cor:Lemma1-1} and \ref{Cor:Lemma1-2}, we have
\begin{displaymath}
J^*(x) = \sum_{i \in \mathcal{I}} J^*(x_i) = \sum_{i \in \mathcal{I}} \overline{J}^*_i(x_i)
\end{displaymath}
and thus (\ref{Eq:JDecomp1Cond}) holds.
Now for $x \in \mathcal{X}$ with $\rho_i(x)=x_i$,
pick a choice of optimal policy $\overline{\pi}^*_i$ for each $i \in \mathcal{I}$
and consider policy $\pi: \mathcal{X} \rightarrow \mathcal{U^{\mathcal{T}}}$
defined by $\pi(x) = \sum_{i \in \mathcal{I}} \overline{\pi}^*_i(x_i)$.
Let $\{x_t \} _{t \in \overline{\mathcal{T}}}$ be the state trajectory of (\ref{Eq:DTsystem1}) 
under policy $\pi$.
We have
\begin{eqnarray*}
J(x,\pi(x)) 
& = & g(x) + \sum_{t \in \mathcal{T}} g(Ax_t + B \pi_{t}(x))\\
& = & \sum_{i \in \mathcal{I}} g(x_i) + \sum_{t \in \mathcal{T}} g(A \sum_{i \in \mathcal{I}} x_{i,t }+ \sum_{i \in \mathcal{I}} B \overline{\pi}^*_{i,t}(x_i) ) \\
& = & \sum_{i \in \mathcal{I}} \big[ g(x_i) + \sum_{t \in \mathcal{T}} g(A x_{i,t }+ B \overline{\pi}^*_{i,t}(x_i) ) \big] \\
& = & \sum_{i \in \mathcal{I}} \overline{J}^*_i (x_i) \\
& = & J^*(x).
\end{eqnarray*}
where the first and fourth equality follow by definition,
the second equality follows from the choice of $\pi$ and the assumption that $g \in \mathcal{G}_s$,
the third follows from $g \in \mathcal{G}_s$,
and the fifth follows from Corollary \ref{Cor:Lemma1-2}.
Thus $\pi$ is indeed an optimal policy,
and (\ref{Eq:PiDecomp1Cond}) follows by noting that the choices of $x$ and $\overline{\pi}^*_i$ were arbitrary.
\end{proof}

\begin{remark}
It follows from condition (\ref{Eq:NSCDecomp1FH}) in Lemma \ref{Lemma:CondDecomp1FH} and the definition of the cost-to-go
function (\ref{Eq:CostToGo}) that if $\big\{ (A,B|_{\mathcal{E}_i},g,T) \big\}_{ i \in \mathcal{I}}$ is a decomposition of $(A,B,g,T)$
for some $T >0$, then $\big\{ (A,B|_{\mathcal{E}_i},g,T') \big\}_{ i \in \mathcal{I}}$ is a decomposition of $(A,B,g,T')$ 
for any choice of $T' <T$. The converse is not necessarily true.
\end{remark}

\color{black}
\begin{proof}[Proof of Lemma \ref{Lemma:CondDecomp1IH}]
Assume that $\big\{ (A,B|_{\mathcal{E}_i},g, \alpha) \big\}_{ i \in \mathcal{I}}$ is a decomposition of $(A,B,g,\alpha)$.
Then (\ref{Eq:NSCDecomp1IH}) follows immediately from (\ref{Eq:PiDecomp1Cond}) in Definition \ref{Def:Decomp1}.

Conversely, assume that (\ref{Eq:NSCDecomp1IH}) holds for all $x \in \mathcal{X}$.
Pick $x \in \mathcal{X}$ and $\pi^*(x) \in \bigoplus_{i \in \mathcal{I}} \mathcal{E}_i^{\mathbb{Z}^+}$. 
We can write $\pi^*(x)=\sum_{i \in \mathcal{I}} \pi_i(x)$, where $\pi_i(x) \in \mathcal{E}_i^{\mathbb{Z}^+}$, $\forall i \in \mathcal{I}$.
Now let $\{ x_t \}_{t \in \mathbb{Z}_+}$ be the state trajectory of (\ref{Eq:DTsystem1}) starting from
$x_0=x$ under policy $\pi^*(x)$.
We have
\begin{eqnarray*}
J^*(x) &= &\sum_{t=0}^{\infty} \alpha^t g(x_t) \\
& = & g(x) + \sum_{t=0}^{\infty} \alpha^{t+1} g\left(Ax_t+B\pi_t^*(x)\right) \\
& = & \sum_{i \in \mathcal{I}} g(x_i) + \sum_{t=0}^{\infty} \alpha^{t+1} \left[\sum_{i \in \mathcal{I}} g\left(Ax_{i,t}+B\pi_{i,t}(x) \right) \right],  \\
\end{eqnarray*}
where $\pi_{i,t}(x)$ denotes the $t^{th}$ component of $\pi_{i}(x)$ and the third equality follows from $g \in \mathcal{G}_s$.
We can thus write
\begin{eqnarray*}
J^*(x) & = & \sum_{i \in \mathcal{I}} \left[g(x_{i}) +\sum_{t=0}^{\infty} \alpha^{t+1} g\left(Ax_{i,t}+B\pi_{i,t}(x) \right) \right] \\
& = & \sum_{i \in \mathcal{I}} \bar{J}_i\left(x_{i},\pi_i(x)\right) \\
&\ge&  \sum_{i \in \mathcal{I}} \bar{J}_i^*(x_{i}).\\
\end{eqnarray*}
We also have, for any choice of any choice of optimal policies $\overline{\pi}_i^*(x_i)$, $i \in \mathcal{I}$,
\begin{eqnarray*}
\sum_{i \in \mathcal{I}} \bar{J}_i^*(x_{i}) & =  & \sum_{i \in \mathcal{I}} \bar{J}_i \left(x_{i},\bar{\pi}_{i}^*(x_{i}) \right) \\
& = & \sum_{i \in \mathcal{I}} \left[ g(x_{i}) + \sum_{t=0}^{\infty} \alpha^{t+1} g\left(Ax_{i,t}+B\bar{\pi}_{i,t}^*(x_i) \right) \right] \\
& = & g(x) + \sum_{t=0}^{\infty} \alpha^{t+1} \left[\sum_{i \in \mathcal{I}} g\left(Ax_{i,t}+B\bar{\pi}_{i,t}^*(x_i) \right) \right] \\
& = & g(x) + \sum_{t=0}^{\infty} \alpha^{t+1} g\left(Ax_t+B \left(\sum_{i \in \mathcal{I}}\bar{\pi}_{i,t}^*(x_i)\right)\right) \\
& =  & J \left(x, \sum_{i \in \mathcal{I}} \bar{\pi}_i^*(x_i) \right) \\
&\ge & J^*(x). 
\end{eqnarray*} 
Hence it follows that $J^*(x) = \sum_{i \in \mathcal{I}} \bar{J}_i^*(x_{i})$.
Since the choice of $x$ was arbitrary, (\ref{Eq:JDecomp1Cond}) holds.
Finally, since the inequality $\sum_{i \in \mathcal{I}} \bar{J}_i^*(x_{i}) \geq J^*(x)$ holds for any choice of $\overline{\pi}_i^*(x_i)$,
(\ref{Eq:PiDecomp1Cond}) also holds, 
and $\big\{ (A,B|_{\mathcal{E}_i},g, \alpha) \big\}_{ i \in \mathcal{I}}$ is a decomposition of $(A,B,g,\alpha)$.
\end{proof} 

The following result provides an intuitive characterization of condition (\ref{Eq:RangeBCondition}) that is also useful in proving Theorem \ref{Thm:SCDecomp1}.

\begin{prop} 
\label{Prop:RangeConditionInterpret}
$\mathcal{R}(B) = \oplus_{i \in \mathcal{I}} \left[\mathcal{R}(B) \cap \mathcal{X}_i \right] \Leftrightarrow \mathcal{U}=\sum_{i \in \mathcal{I}} \mathcal{E}_i$.
\end{prop}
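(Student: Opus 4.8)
The plan is to exploit the fact that $B$ is injective, hence an isomorphism from $\mathcal{U}$ onto its range $\mathcal{R}(B)$, and to transport the claimed identity back and forth across this isomorphism. The first step is to record the key identity $B(\mathcal{E}_i) = \mathcal{R}(B) \cap \mathcal{X}_i$ for every $i \in \mathcal{I}$. One inclusion is immediate: if $u \in \mathcal{E}_i$ then $Bu \in \mathcal{X}_i$ by definition and $Bu \in \mathcal{R}(B)$ trivially, so $B(\mathcal{E}_i) \subseteq \mathcal{R}(B) \cap \mathcal{X}_i$. For the reverse inclusion, any $y \in \mathcal{R}(B) \cap \mathcal{X}_i$ can be written $y = Bu$ for some $u \in \mathcal{U}$ since $y \in \mathcal{R}(B)$, and because $Bu = y \in \mathcal{X}_i$ we have $u \in \mathcal{E}_i$, whence $y \in B(\mathcal{E}_i)$.

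Second, I would observe that the sum $\sum_{i \in \mathcal{I}} [\mathcal{R}(B) \cap \mathcal{X}_i]$ is automatically direct, so the direct sum appearing in the statement imposes no extra requirement beyond the equality of subspaces. Indeed, each summand $\mathcal{R}(B) \cap \mathcal{X}_i$ lies inside $\mathcal{X}_i$, and since $\mathcal{X} = \oplus_{i \in \mathcal{I}} \mathcal{X}_i$, any relation $\sum_{i \in \mathcal{I}} z_i = 0$ with $z_i \in \mathcal{X}_i$ forces every $z_i = 0$ by uniqueness of the decomposition. Consequently $\oplus_{i \in \mathcal{I}} [\mathcal{R}(B) \cap \mathcal{X}_i] = \sum_{i \in \mathcal{I}} [\mathcal{R}(B) \cap \mathcal{X}_i]$, and the proposition reduces to showing that $\mathcal{R}(B) = \sum_{i \in \mathcal{I}} [\mathcal{R}(B) \cap \mathcal{X}_i]$ holds if and only if $\mathcal{U} = \sum_{i \in \mathcal{I}} \mathcal{E}_i$.

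Third, I would close the argument using injectivity. Because $B$ is an isomorphism onto $\mathcal{R}(B)$, it preserves sums of subspaces as well as equality of subspaces; applying $B$ to $\mathcal{U} = \sum_{i \in \mathcal{I}} \mathcal{E}_i$ and invoking the identity from the first step gives $\mathcal{R}(B) = B(\mathcal{U}) = \sum_{i \in \mathcal{I}} B(\mathcal{E}_i) = \sum_{i \in \mathcal{I}} [\mathcal{R}(B) \cap \mathcal{X}_i]$, and conversely applying the inverse of $B$ on $\mathcal{R}(B)$ recovers $\mathcal{U} = \sum_{i \in \mathcal{I}} \mathcal{E}_i$ from the range identity. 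To spell out one direction concretely, given $\mathcal{R}(B) = \sum_{i \in \mathcal{I}} [\mathcal{R}(B) \cap \mathcal{X}_i]$, any $u \in \mathcal{U}$ has $Bu = \sum_{i \in \mathcal{I}} Bu_i$ with $u_i \in \mathcal{E}_i$, and injectivity yields $u = \sum_{i \in \mathcal{I}} u_i \in \sum_{i \in \mathcal{I}} \mathcal{E}_i$; the reverse inclusion $\sum_{i \in \mathcal{I}} \mathcal{E}_i \subseteq \mathcal{U}$ is trivial.

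I do not expect a genuine obstacle here, since the statement is pure finite-dimensional linear algebra valid over the arbitrary field $\mathcal{F}$. The only point that warrants care is the second step, namely recognizing that the directness of the sum is free, so that no independence condition among the subspaces $\mathcal{E}_i$ needs to be verified separately; the equivalence is then genuinely between two equalities of subspaces linked by the isomorphism induced by the injective map $B$.
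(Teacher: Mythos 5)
Your proof is correct and takes essentially the same route as the paper's: both directions amount to transporting subspace decompositions across $B$ via the identity $B(\mathcal{E}_i)=\mathcal{R}(B)\cap\mathcal{X}_i$ (implicit in the paper's element-chasing, explicit in yours), with directness of $\sum_{i\in\mathcal{I}}[\mathcal{R}(B)\cap\mathcal{X}_i]$ automatic since each summand sits in $\mathcal{X}_i$ and $\mathcal{X}=\oplus_{i\in\mathcal{I}}\mathcal{X}_i$ --- a point the paper asserts without comment and you rightly spell out. The only slight difference is that in the direction recovering $\mathcal{U}=\sum_{i\in\mathcal{I}}\mathcal{E}_i$ the paper concludes $u-\sum_{i\in\mathcal{I}}u_i\in\mathcal{N}(B)\subseteq\mathcal{E}_i$, which works even for non-injective $B$, whereas you invoke the standing injectivity assumption to get $u=\sum_{i\in\mathcal{I}}u_i$; both are valid here.
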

\begin{proof} 
Assume $\mathcal{R}(B) = \oplus_{i \in \mathcal{I}} \left[\mathcal{R}(B) \cap \mathcal{X}_i \right]$ and pick $u \in \mathcal{U}$. 
Then $Bu=\sum_{i \in \mathcal{I}} b_i, \, b_i \in \mathcal{R}(B) \cap \mathcal{X}_i$. Since $b_i \in \mathcal{R}(B)$ there exists $u_i \in \mathcal{U}$ such that $b_i=Bu_i$. Since $b_i \in \mathcal{X}_i \Rightarrow u_i \in \mathcal{E}_i$. 
Thus $Bu = \sum_{i \in \mathcal{I}} Bu_i \Rightarrow u - \sum_{i \in \mathcal{I}} u_i \in \mathcal{N}(B)$. Since 
$\mathcal{N}(B) \subseteq \mathcal{E}_i$ for any $i \in \mathcal{I}$, $u \in \sum_{i \in \mathcal{I}} \mathcal{E}_i$, and the desired equality follows.

Conversely, assume $\mathcal{U}=\sum_{i \in \mathcal{I}} \mathcal{E}_i$ and pick $b \in \mathcal{R}(B)$. 
Then $b=Bu$ for some $u \in \mathcal{U}$. Now $u=\sum_{i \in \mathcal{I}} u_i, \, u_i \in \mathcal{E}_i$ and so $b = \sum_{i \in \mathcal{I}} Bu_i$ and $Bu_i \in \left[\mathcal{R}(B) \cap \mathcal{X}_i\right]$. Thus $\mathcal{R}(B) = \sum_{i \in \mathcal{I}} \left[\mathcal{R}(B) \cap \mathcal{X}_i \right]$ and this sum is direct.
\end{proof}

\begin{proof}[Proof of Theorem \ref{Thm:SCDecomp1}]
Assume that (\ref{Eq:RangeBCondition}) holds.
It follows from Proposition \ref{Prop:RangeConditionInterpret} that $\mathcal{U}=\sum_{i \in \mathcal{I}} \mathcal{E}_i$,
and thus (\ref{Eq:NSCDecomp1FH}) holds trivially for any choice of $T>0$, and (\ref{Eq:NSCDecomp1IH}) holds trivially.
\end{proof}

Let $\mathcal{V}$ be a subspace of $\mathcal{U}$ such that
\begin{equation}
\mathcal{U} = \oplus_{i \in \mathcal{I}} \mathcal{E}_i \oplus \mathcal{V}.
\end{equation}
In particular, when (\ref{Eq:RangeBCondition}) holds we have $\mathcal{V} = \{ 0 \}$.
We have the following observations:

\begin{prop}
\label{Prop:NCDecomp1FH}
Consider system (\ref{Eq:DTsystem1}),
a decomposition of $\mathcal{X}$ over $A$,
and a cost function (\ref{Eq:CostFunction}) satisfying $g \in \mathcal{G}_s$. 
If $\big\{ (A,B|_{\mathcal{E}_i},g,T) \big\}_{ i \in \mathcal{I}}$ is a decomposition of $(A,B,g,T)$,
then $A(\mathcal{X}) \cap B(\mathcal{V})=\left\{0 \right\}$.
\end{prop}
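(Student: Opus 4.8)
The plan is to argue by contradiction, leveraging the necessary and sufficient condition of Lemma \ref{Lemma:CondDecomp1FH}. Suppose $\big\{ (A,B|_{\mathcal{E}_i},g,T) \big\}_{ i \in \mathcal{I}}$ is a decomposition of $(A,B,g,T)$, so that (\ref{Eq:NSCDecomp1FH}) holds for all $x \in \mathcal{X}$ and all $t \in \mathcal{T}$, but suppose for contradiction that $A(\mathcal{X}) \cap B(\mathcal{V}) \neq \{0\}$. Then I would pick a nonzero $w \in A(\mathcal{X}) \cap B(\mathcal{V})$ and write $w = Ax_0$ for some $x_0 \in \mathcal{X}$ and $w = Bv$ for some $v \in \mathcal{V}$. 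Since $B$ is injective and $w \neq 0$, the vector $v$ is necessarily nonzero.

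Next I would specialize condition (\ref{Eq:NSCDecomp1FH}) to the state $x = x_0$ and the time $t = T-1$, where $J^*_{t+1} = J^*_T = g$ by the definition of the cost-to-go function (\ref{Eq:CostToGo}). The one-step problem then reduces to minimizing $g(Ax_0 + Bu)$ over $u \in \mathcal{U}$. Because $g$ is non-negative and vanishes exactly at the origin, this minimum equals $0$ and is attained precisely when $Ax_0 + Bu = 0$, i.e.\ when $Bu = -Ax_0 = -Bv$. Injectivity of $B$ forces $u = -v$ as the \emph{unique} minimizer, so $\argmin_{u \in \mathcal{U}} g(Ax_0 + Bu) = \{-v\}$.

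The final step is to observe that $-v$ cannot lie in $\oplus_{i \in \mathcal{I}} \mathcal{E}_i$: since $-v \in \mathcal{V}$ and the sum $\mathcal{U} = \oplus_{i \in \mathcal{I}} \mathcal{E}_i \oplus \mathcal{V}$ is direct, membership of $-v$ in both $\oplus_{i \in \mathcal{I}} \mathcal{E}_i$ and $\mathcal{V}$ would force $-v = 0$, contradicting $v \neq 0$. Hence $\argmin_{u \in \mathcal{U}} g(Ax_0 + Bu) \cap [\oplus_{i \in \mathcal{I}} \mathcal{E}_i] = \emptyset$ for the choice $x = x_0$, $t = T-1$, which directly contradicts (\ref{Eq:NSCDecomp1FH}). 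This contradiction establishes $A(\mathcal{X}) \cap B(\mathcal{V}) = \{0\}$.

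I expect the only delicate point to be the uniqueness of the minimizer: the argument hinges on $g$ being strictly positive away from the origin together with the injectivity of $B$, which jointly collapse the $\argmin$ to a single point lying entirely in $\mathcal{V}$. Once that singleton is pinned down, the direct-sum structure makes its disjointness from $\oplus_{i \in \mathcal{I}} \mathcal{E}_i$ immediate. Note that no separate induction on the horizon is required, since choosing $t = T-1$ reduces the relevant cost-to-go to $g$ itself.
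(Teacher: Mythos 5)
Your proposal is correct and takes essentially the same approach as the paper's proof: both invoke Lemma \ref{Lemma:CondDecomp1FH} to obtain condition (\ref{Eq:NSCDecomp1FH}) at $t = T-1$, observe that the minimum of $g(Ax_0 + Bu)$ is zero and (since $g$ vanishes only at the origin and $B$ is injective) is attained only at $u = -v$, and conclude from the directness of $\mathcal{U} = \oplus_{i \in \mathcal{I}} \mathcal{E}_i \oplus \mathcal{V}$ that $v = 0$. Your contradiction framing via the singleton $\argmin$ is merely a repackaging of the paper's direct argument, which instead takes an arbitrary $\xi$ in the intersection and a minimizer $u_{\xi} \in \oplus_{i \in \mathcal{I}} \mathcal{E}_i$ guaranteed by (\ref{Eq:NSCDecomp1FH}) and derives $v_{\xi} = -u_{\xi} = 0$ the same way.
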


\begin{proof}
Assume $\big\{ (A,B|_{\mathcal{E}_i},g,T) \big\}_{ i \in \mathcal{I}}$ is a decomposition of $(A,B,g,T)$ and 
pick $\xi \in A(\mathcal{X}) \cap B(\mathcal{V})$.
We have $\xi = Ax_{\xi}=Bv_{\xi}$ for some $x_{\xi} \in \mathcal{X}$, $v_{\xi} \in \mathcal{V}$. 
By Lemma \ref{Lemma:CondDecomp1FH} it follows from (\ref{Eq:NSCDecomp1FH}) evaluated at $t=T-1$ that
there exists $u_{\xi} \in \oplus_i \mathcal{E}_i$ such that $\displaystyle \min_{u \in \mathcal{U}} g(Ax_{\xi} + Bu) = g(Ax_{\xi}+Bu_{\xi})$. 
But we have $\displaystyle \min_{u \in \mathcal{U}} g(Ax_{\xi} + Bu)=\min_{u \in \mathcal{U}} g(Bv_{\xi} + Bu)=0$,
which implies $g(Ax_{\xi}+Bu_{\xi})=g(Bv_{\xi}+Bu_{\xi})=0$. 
It follows from (\ref{Eq:CostFunction}) that $0=Bv_{\xi}+Bu_{\xi}=
B(v_{\xi}+u_{\xi})$, which implies that $v_{\xi}+u_{\xi}=0$, since $B$ is injective. 
The only possibility for the last relation to be true is $v_{\xi}=0$, from which it follows that $\xi = B v_{\xi} = 0$,
and thus $A(\mathcal{X}) \cap B(\mathcal{V})=\left\{0 \right\}$ indeed.
\end{proof}

\begin{prop} 
\label{Prop:J0}
Consider a DP problem $(A,B,g,T)$ or $(A,B,g,\alpha)$.
We have $J^*(x)=0 \Leftrightarrow x=0$.
\end{prop}

\begin{proof}
Since $g$ is non-negative, 
we have $J(x,\pi) \geq 0$, for all choices of $x$ and $\pi$.
It thus follows that $J^*(x) \geq 0$, for all $x \in \mathcal{X}$.
Now let $x=0$, and consider policy $\pi(0)$ defined by $\pi_t(0) \in \mathcal{N}(B)$ for all $t \in \mathcal{T}$.
The state trajectory of (\ref{Eq:DTsystem1}) starting at $x=0$ under policy $\pi$ satisfies $x_t=0$, $\forall t \in \overline{\mathcal{T}}$,
and its associated cost $J(0,\pi(0))=0$.
This policy thus achieves the minimum, and $J^*(0)=0$ indeed holds.
Conversely, assume $x \neq 0$:
We then have $g(x) >0$, and thus $J^*(x) \geq g(x) >0$.
\end{proof}

\begin{prop} 
\label{Prop:Pi0}
Consider a DP problem $(A,B,g,T)$ or $(A,B,g,\alpha)$.
We have $\pi_t^*(0) \in \mathcal{N}(B)$, $\forall t \in \mathcal{T}$.
\end{prop}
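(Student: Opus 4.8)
The plan is to leverage Proposition \ref{Prop:J0}, which we have just established, together with the positive-definiteness of $g$. First I would recall that an optimal policy $\pi^*(0)$ exists (by the Remark following Problem \ref{Prob:1}), and that, by the very definition of optimality, the state trajectory $\{x_t\}_{t \in \overline{\mathcal{T}}}$ it generates from $x_0 = 0$ attains the optimal cost, i.e.\ $J(0, \pi^*(0)) = J^*(0)$. By Proposition \ref{Prop:J0} we have $J^*(0) = 0$, and hence $\sum_{t \in \overline{\mathcal{T}}} \alpha^t g(x_t) = 0$.

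Next I would exploit non-negativity: since each summand $\alpha^t g(x_t)$ is non-negative (with $\alpha = 1$ in the finite horizon case and $\alpha \in (0,1)$ in the infinite horizon case), the vanishing of the sum forces $g(x_t) = 0$ for every $t \in \overline{\mathcal{T}}$. By the defining property (\ref{Eq:CostFunction}) of $g$, namely $g(x) = 0 \Leftrightarrow x = 0$, this yields $x_t = 0$ for all $t \in \overline{\mathcal{T}}$; that is, the optimal trajectory emanating from the origin is identically zero.

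Finally I would read off the conclusion directly from the dynamics. For each $t \in \mathcal{T}$, the state transition equation (\ref{Eq:DTsystem1}) gives $x_{t+1} = A x_t + B \pi_t^*(0)$. Substituting $x_t = x_{t+1} = 0$ leaves $B \pi_t^*(0) = 0$, i.e.\ $\pi_t^*(0) \in \mathcal{N}(B)$, as claimed. The same chain of reasoning applies verbatim to both the finite and the infinite horizon cases, the only difference being the value of $\alpha$, which does not affect any step.

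As for the main obstacle: there is essentially none of substance here, since the statement is a direct consequence of Proposition \ref{Prop:J0} and the definiteness of $g$. The only points meriting a modicum of care are (i) arguing, in the infinite horizon case, that the vanishing of the series forces each individual state to vanish, which is immediate from term-wise non-negativity of $\alpha^t g(x_t)$, and (ii) observing that the argument uses no uniqueness of the optimal policy, so the conclusion in fact holds for \emph{any} optimal policy at the origin.
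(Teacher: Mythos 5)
Your proof is correct and follows exactly the route the paper intends: the paper disposes of this proposition with the one-liner ``follows immediately from the proof of Proposition~\ref{Prop:J0},'' and your argument ($J^*(0)=0$, term-wise non-negativity forcing $g(x_t)=0$, positive-definiteness of $g$ forcing $x_t=0$, and then $B\pi_t^*(0)=0$ from the dynamics) is precisely the chain of reasoning being elided there, spelled out with appropriate care in both horizon cases.
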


\begin{proof}
Follows immediately from the proof of Proposition \ref{Prop:J0}.
\end{proof}

\begin{prop}
\label{Prop:NCDecomp1IH}
Consider system (\ref{Eq:DTsystem1}),
a decomposition of $\mathcal{X}$ over $A$,
and a cost function (\ref{Eq:CostFunction}) satisfying $g \in \mathcal{G}_s$. 
If $\big\{ (A,B|_{\mathcal{E}_i},g,\alpha) \big\}_{ i \in \mathcal{I}}$ is a decomposition of $(A,B,g,\alpha)$,
then $A(\mathcal{X}) \cap B(\mathcal{V})=\left\{0 \right\}$.
\end{prop}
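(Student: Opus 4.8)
The plan is to mirror the finite-horizon argument of Proposition \ref{Prop:NCDecomp1FH}, replacing the single-step optimization at $t = T-1$ by a global optimality argument suited to the infinite-horizon setting. Assuming $\big\{ (A,B|_{\mathcal{E}_i},g,\alpha) \big\}_{i \in \mathcal{I}}$ is a decomposition of $(A,B,g,\alpha)$, Lemma \ref{Lemma:CondDecomp1IH} guarantees that (\ref{Eq:NSCDecomp1IH}) holds, so that for every initial state there is an optimal policy all of whose components lie in $\oplus_{i \in \mathcal{I}} \mathcal{E}_i$. I would pick an arbitrary $\xi \in A(\mathcal{X}) \cap B(\mathcal{V})$ and write $\xi = Ax_\xi = Bv_\xi$ for some $x_\xi \in \mathcal{X}$ and $v_\xi \in \mathcal{V}$, with the goal of concluding $v_\xi = 0$.

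The first step is to compute $J^*(x_\xi)$ exactly. Since the $t=0$ term of the cost is always $g(x_\xi)$, we have $J^*(x_\xi) \geq g(x_\xi)$. Conversely, starting at $x_0 = x_\xi$ and applying the control $u_0 = -v_\xi$ yields $x_1 = Ax_\xi + Bu_0 = B(v_\xi - v_\xi) = 0$; keeping $u_t \in \mathcal{N}(B)$ thereafter, as in the proof of Proposition \ref{Prop:J0}, holds the trajectory at zero, so this policy attains cost $g(x_\xi)$. Hence $J^*(x_\xi) = g(x_\xi)$.

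Next I would invoke (\ref{Eq:NSCDecomp1IH}) to obtain an optimal policy $\pi^*(x_\xi) \in [\oplus_{i \in \mathcal{I}} \mathcal{E}_i]^{\mathbb{Z}_+}$, with associated trajectory $\{x_t\}_{t \in \mathbb{Z}_+}$. Optimality gives $\sum_{t=0}^{\infty} \alpha^t g(x_t) = g(x_\xi)$, and subtracting the $t=0$ term leaves $\sum_{t=1}^{\infty} \alpha^t g(x_t) = 0$. Because $\alpha \in (0,1)$ and $g$ is non-negative with $g(x) = 0 \Leftrightarrow x = 0$ by (\ref{Eq:CostFunction}), every term must vanish; in particular $x_1 = 0$. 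Since $x_1 = Ax_\xi + B\pi^*_0(x_\xi) = B(v_\xi + \pi^*_0(x_\xi))$, injectivity of $B$ forces $\pi^*_0(x_\xi) = -v_\xi$.

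The conclusion then follows from the direct-sum structure of $\mathcal{U}$: here $\pi^*_0(x_\xi) \in \oplus_{i \in \mathcal{I}} \mathcal{E}_i$ while $v_\xi \in \mathcal{V}$, so $-v_\xi \in (\oplus_{i \in \mathcal{I}} \mathcal{E}_i) \cap \mathcal{V} = \{0\}$ because $\mathcal{U} = \oplus_{i \in \mathcal{I}} \mathcal{E}_i \oplus \mathcal{V}$. Thus $v_\xi = 0$, whence $\xi = Bv_\xi = 0$ and $A(\mathcal{X}) \cap B(\mathcal{V}) = \{0\}$. The only genuinely new ingredient relative to the finite-horizon proof is the observation that infinite-horizon optimality at the value $g(x_\xi)$ forces the entire tail of the trajectory to vanish; once $x_1 = 0$ is secured, the remaining algebra using the injectivity of $B$ and the direct-sum decomposition of $\mathcal{U}$ is identical to the finite-horizon case. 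I expect this vanishing-tail step—pinning the optimal first control to $-v_\xi$—to be the crux, though it is straightforward given $\alpha > 0$ and the positive-definiteness of $g$.
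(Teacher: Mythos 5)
Your proof is correct and takes essentially the same route as the paper's: use Lemma \ref{Lemma:CondDecomp1IH} to extract an optimal policy with components in $\oplus_{i \in \mathcal{I}} \mathcal{E}_i$, show its first control must equal $-v_{\xi}$, and conclude $v_{\xi} \in \left( \oplus_{i \in \mathcal{I}} \mathcal{E}_i \right) \cap \mathcal{V} = \{0\}$ via injectivity of $B$ and the direct sum $\mathcal{U} = \oplus_{i \in \mathcal{I}} \mathcal{E}_i \oplus \mathcal{V}$. The only difference is cosmetic: where the paper invokes (\ref{Eq:u*IH}) and Proposition \ref{Prop:J0} and omits the rest ``for brevity,'' you supply exactly those details through the vanishing-tail argument ($\sum_{t \geq 1} \alpha^t g(x_t) = 0$ forces $x_1 = 0$ since $g(x)=0 \Leftrightarrow x=0$), which is the natural infinite-horizon analogue of the positivity argument in Proposition \ref{Prop:NCDecomp1FH}.
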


\begin{proof}
Assume $\big\{ (A,B|_{\mathcal{E}_i},g,\alpha) \big\}_{ i \in \mathcal{I}}$ is a decomposition of $(A,B,g,\alpha)$ and 
pick $\xi \in A(\mathcal{X}) \cap B(\mathcal{V})$.
We have $\xi = Ax_{\xi}=Bv_{\xi}$ for some $x_{\xi} \in \mathcal{X}$, $v_{\xi} \in \mathcal{V}$. 
It follows from (\ref{Eq:NSCDecomp1IH}) and (\ref{Eq:u*IH}) that 
there exists $u_{\xi} \in \oplus_i \mathcal{E}_i$ such that $\displaystyle \min_{u \in \mathcal{U}} J(Ax_{\xi} + Bu) = J(Ax_{\xi}+Bu_{\xi})$,
from Proposition \ref{Prop:J0} and an argument similar to that made in Proposition \ref{Prop:NCDecomp1FH} (omitted for brevity) 
that $0=Bv_{\xi}+Bu_{\xi}= B(v_{\xi}+u_{\xi})$, and thus $\xi = B v_{\xi} = 0$ and $A(\mathcal{X}) \cap B(\mathcal{V})=\left\{0 \right\}$.
\end{proof}

We are now ready to prove Theorem \ref{Thm:NSCDecomp1}:

\begin{proof}[Proof of Theorem \ref{Thm:NSCDecomp1}]
We have $(a) \Rightarrow (b)$ and $(a) \Rightarrow (c)$ by Theorem \ref{Thm:SCDecomp1}.
To show that $(b) \Rightarrow (a)$, assume that $\big\{ (A,B|_{\mathcal{E}_i},g,T) \big\}_{ i \in \mathcal{I}}$ is a decomposition of $(A,B,g,T)$.
It follows from Proposition \ref{Prop:NCDecomp1FH} that $A(\mathcal{X}) \cap B(\mathcal{V})=\left\{0 \right\}$.
Since $A$ in invertible by assumption, $A(\mathcal{X}) = \mathcal{X}$, and $B(\mathcal{V})=\{ 0 \}$.
Since $B$ is injective, we conclude that $\mathcal{V} =0$, and thus $\mathcal{U} = \oplus \mathcal{E}_i$,
and (\ref{Eq:RangeBCondition}) follows from Proposition \ref{Prop:RangeConditionInterpret}.
The proof $(c) \Rightarrow (a)$ similarly follows from Proposition \ref{Prop:NCDecomp1IH}.
\end{proof}

We now turn our attention to the second family of DP problems and their associated notion of decomposition.
We begin by establishing the following intermediate result:

\begin{prop}
\label{Prop:EquivCondsFH}
Consider system (\ref{Eq:DTsystem1}),
a decomposition of $\mathcal{X}$ over $A$,
a cost function (\ref{Eq:CostFunction}) satisfying $g \in \mathcal{G}_s$,
a DP problem $(A,B,g,T)$ and a family $\big\{ (A,\rho_i \circ B ,g,\alpha) \big\}_{ i \in \mathcal{I}}$.
The following two statements are equivalent:
\begin{enumerate}[(a)]
\item For any choice of optimal policies $\pi_i^*$, $i \in \mathcal{I}$, there exists an optimal policy $\pi^*$ such that (\ref{Eq:PiDecomp2Cond}) holds.
\item For any choice of optimal control laws of $\big\{ (A,\rho_i \circ B ,g,T) \big\}$, $i \in \mathcal{I}$,
there exists an optimal control law of $(A,B,g,T)$ such that (\ref{Eq:NSCDecomp2FH}) holds.
\end{enumerate}
\end{prop}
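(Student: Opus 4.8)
The plan is to recognize that (\ref{Eq:NSCDecomp2FH}) is precisely the state-feedback (control-law) transcription of (\ref{Eq:PiDecomp2Cond}), so that the equivalence hinges on the standard correspondence between optimal policies and optimal control laws, supplemented by a trajectory-matching argument. First I would record the algebraic reduction of (\ref{Eq:NSCDecomp2FH}). Since each $\mathcal{X}_i$ is $A$-invariant, $\rho_i(Ax)=Ax_i$, so (\ref{Eq:NSCDecomp2FH}) is equivalent to $\rho_i\circ Bu^*(x,t)=\rho_i\circ Bu_i^*(x_i,t)$ for every $i$; summing over $i$ and using $\sum_{i\in\mathcal{I}}\rho_i=\mathrm{id}$ (and conversely applying $\rho_j$ to the sum, whose $i$-th term lies in $\mathcal{X}_i$) shows this is in turn equivalent to the single identity $Bu^*(x,t)=\sum_{i\in\mathcal{I}}\rho_i\circ Bu_i^*(x_i,t)$, which is exactly (\ref{Eq:PiDecomp2Cond}) written for control laws rather than for the components of a policy.

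The second ingredient I would establish is a trajectory-matching fact: if the original system (\ref{Eq:DTsystem1}) evolves under a control law $u^*$ from $x_0$, each subsystem (\ref{Eq:DTsystem3}) evolves under $u_i^*$ from $x_{i,0}=\rho_i(x_0)$, and (\ref{Eq:NSCDecomp2FH}) holds along the way, then $\rho_i(x_t)=x_{i,t}$ for all $t$ and $i$. This follows by induction on $t$: the base case is the initialization, and the step uses $\rho_i(x_{t+1})=Ax_{i,t}+\rho_i\circ Bu^*(x_t,t)=Ax_{i,t}+\rho_i\circ Bu_i^*(x_{i,t},t)=x_{i,t+1}$, invoking $A$-invariance, the reduced form of (\ref{Eq:NSCDecomp2FH}) at $x_t$, and the inductive hypothesis.

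With these in hand, the direction (b)$\Rightarrow$(a) is direct. Given arbitrary optimal policies $\pi_i^*$, take underlying optimal control laws $u_i^*$ (consistent with $\pi_{i,t}^*(x_i)=u_i^*(x_{i,t},t)$ as in (\ref{Eq:u*FH})); statement (b) supplies an optimal control law $u^*$ satisfying (\ref{Eq:NSCDecomp2FH}), whose induced policy $\pi^*$ is optimal. Evaluating the control-law identity along the trajectory and using trajectory matching ($x_{i,t}=\rho_i(x_t)$) gives $B\pi_t^*(x)=Bu^*(x_t,t)=\sum_i\rho_i\circ Bu_i^*(x_{i,t},t)=\sum_i\rho_i\circ B\pi_{i,t}^*(x_i)$, i.e.\ (\ref{Eq:PiDecomp2Cond}).

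The converse (a)$\Rightarrow$(b) is the delicate direction and where I expect the main obstacle. Given optimal control laws $u_i^*$, form the induced optimal policies and apply (a) to obtain an optimal policy $\pi^*$ satisfying (\ref{Eq:PiDecomp2Cond}). Reading (\ref{Eq:PiDecomp2Cond}) at $t=0$, where the trajectory sits at the initial state and the initial state ranges over all of $\mathcal{X}$, already yields (\ref{Eq:NSCDecomp2FH}) for $t=0$ at \emph{every} state. The difficulty is that for $t\ge 1$ the policy condition only constrains $u^*$ along reachable trajectories, whereas (\ref{Eq:NSCDecomp2FH}) must hold at all states. I would bridge this gap using the time-invariance of the dynamics and stage cost: the optimal control law at time $t$ for horizon $T$ solves the same per-state minimization (\ref{Eq:u*FH}) as the optimal control law at time $0$ for horizon $T-t$, since $J_{t+1}^*$ for horizon $T$ equals $J_0^*$ for horizon $T-t-1$. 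Running the $t=0$ argument for the shorter horizon, via an induction on $T$ in which (a) for horizon $T$ yields (a) for horizon $T-1$ by passing to policy tails (the tail from time $1$ of an optimal horizon-$T$ policy is an optimal horizon-$(T-1)$ policy, and (\ref{Eq:PiDecomp2Cond}) at times $1,\dots,T-1$ restricts to times $0,\dots,T-2$), then places (\ref{Eq:NSCDecomp2FH}) at time $t$ over all states. The crux is thus promoting the along-trajectory, initial-state content of (\ref{Eq:PiDecomp2Cond}) to the all-states content of (\ref{Eq:NSCDecomp2FH}) at times $t\ge 1$, which the horizon-shift makes possible.
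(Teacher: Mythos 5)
Your algebraic reduction of (\ref{Eq:NSCDecomp2FH}) to the single identity $Bu^*(x,t)=\sum_{i\in\mathcal{I}}\rho_i\circ Bu_i^*(x_i,t)$ is exactly the content of the paper's proof (the paper adds $Ax$ to both sides, uses $\rho_i(Ax)=Ax_i$ from $A$-invariance, and invokes uniqueness of direct-sum components), and your trajectory-matching lemma $\rho_i(x_t)=x_{i,t}$ is a genuine improvement in rigor: the paper needs it implicitly to pass between $\pi_{i,t}^*(x_i)=u_i^*(x_{i,t},t)$ and the component $\rho_i(x_t)$, but never states or proves it. Your direction (b)$\Rightarrow$(a) is sound.

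The gap is in (a)$\Rightarrow$(b), precisely at the step you yourself flag as delicate. The proposed bridge --- an induction on the horizon in which ``(a) for horizon $T$ yields (a) for horizon $T-1$ by passing to policy tails'' --- does not go through: the tail from time $1$ of an optimal horizon-$T$ policy is an optimal horizon-$(T-1)$ policy only for initial states of the form $Ax+B\pi_0^*(x)$, $x\in\mathcal{X}$, and this set can be a proper subset of $\mathcal{X}$ (it lies inside $A(\mathcal{X})+\mathcal{R}(B)$; note $A$ is \emph{not} assumed invertible here --- that hypothesis appears only in Theorem \ref{Thm:NSCDecomp1}). So the tail construction constrains the horizon-$(T-1)$ matching only on time-$1$-reachable states, while both statement (a) at horizon $T-1$ and, ultimately, (\ref{Eq:NSCDecomp2FH}) at times $t\ge 1$ quantify over \emph{all} of $\mathcal{X}$; the quantifier promotion you need is exactly what the induction was supposed to deliver, so the argument fails at unreachable states. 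It is worth knowing that the paper does not attempt this promotion at all: it reads (\ref{Eq:PiDecomp2Cond}) through the identification $\pi_t^*(x)\leftrightarrow u^*(x,t)$, i.e.\ it treats the policy condition as a per-state, per-time identity on control-law values (the same convention also quietly disposes of the fact that an arbitrary optimal policy need not be realizable by a single control law), and the whole proof is then the four-line chain of pointwise equivalences plus the trivial passage between a control law and its induced policy. Under that loose reading your difficulty disappears by fiat; under your stricter (and more honest) reading, where a policy pins down the control law only along its own trajectories, the equivalence at $t\ge 1$ over all states is precisely the unclosed gap, and closing it would require either restricting (\ref{Eq:NSCDecomp2FH}) to reachable pairs $(x,t)$ or adopting the paper's identification as a definition.
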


\begin{proof}
Pick $x \in \mathcal{X}$ and a choice of optimal policies $\pi_i^*(x_i)$, $i \in \mathcal{I}$.
Let $\{ x_{i,t} \}_{t \in \overline{\mathcal{T}}}$ be the state trajectory of the $i^{th}$ subsystem under 
policy $\pi_i^*(x_i)$, and let $u_i^*(x_{i,t},t) = \pi^*_{i,t}(x_i)$ be the corresponding optimal control laws.

To show $(a) \Rightarrow (b)$, let $\pi^*$ be an optimal policy such that (\ref{Eq:PiDecomp2Cond}) holds,
let $\{ x_t \}_{t \in \overline{\mathcal{T}}}$ be the state trajectory of (\ref{Eq:DTsystem1}) under this policy,
with $u^*(x_t,t) = \pi^*_t(x)$.
We have
\begin{eqnarray*}
B \pi_t^*(x) =  \sum_{i \in \mathcal{I}} \rho_i \circ B \pi_{i,t}^* (x_i), \forall t \in \mathcal{T} 
& \Leftrightarrow & Ax + B \pi_t^*(x) = Ax + \sum_{i \in \mathcal{I}} \rho_i \circ B \pi_{i,t}^* (x_i), \forall t \in \mathcal{T} \\
& \Leftrightarrow & Ax + B u^*(x,t) = \sum_{i \in \mathcal{I}} Ax_i + \sum_{i \in \mathcal{I}} \rho_i \circ B u_{i}^* (x_i,t), \forall t \in \mathcal{T} \\
& \Leftrightarrow & Ax + B u^*(x,t) = \sum_{i \in \mathcal{I}} \big[ Ax_i +  \rho_i \circ B u_{i}^* (x_i,t) \big], \forall t \in \mathcal{T} \\
& \Leftrightarrow & \rho_i (Ax + B u^*(x,t)) = Ax_i +  \rho_i \circ B u_{i}^* (x_i,t), \forall t \in \mathcal{T}, \forall i \in \mathcal{I}. 
\end{eqnarray*}
What is left is to note that the choice of $x$ was arbitrary.

To show $(b) \Rightarrow (a)$, 
let $u^*$ be an optimal control law satisfying (\ref{Eq:NSCDecomp2FH}) and consider the policy $\pi^*$
defined by  $\pi^*_t(x) = u^*(x_t,t)$. 
By the above equivalence, we have that $\pi^*$ satisfies (\ref{Eq:PiDecomp2Cond}).
\end{proof}

We are now ready to prove Lemma \ref{Lemma:CondDecomp2FH}:  

\begin{proof}[Proof of Lemma \ref{Lemma:CondDecomp2FH}]
Having established in Proposition \ref{Prop:EquivCondsFH} the equivalence between conditions 
(\ref{Eq:PiDecomp2Cond}) and (\ref{Eq:NSCDecomp2FH}),
what is left is to show that (\ref{Eq:NSCDecomp2FH}) $\Rightarrow$ (\ref{Eq:JDecomp2Cond}).

Assume that (\ref{Eq:NSCDecomp2FH}) holds,
let $\{ x_{i,t} \}_{t \in \overline{\mathcal{T}}}$ be the state trajectory of the $i^{th}$ subsystem under
the optimal control law $u_i^*(x_{i,t},t)$, $i \in \mathcal{I}$,
and let $\{ x_t \}_{t \in \overline{\mathcal{T}}}$ be the state trajectory of (\ref{Eq:DTsystem1}) under 
the corresponding optimal control law $u^*(x_{t},t)$.
For any $x \in \mathcal{X}$, we have
\begin{eqnarray*}
J^*(x)
& = & g(x) + \sum_{t \in \mathcal{T}} g(Ax_t+ B u^*(x,t)) \\
& = & g(x) +  \sum_{t \in \mathcal{T}} g\big( \sum_{i \in \mathcal{I}} \rho_i ( Ax_t+ B u^*(x,t) ) \big) \\
& = & g(x) +  \sum_{t \in \mathcal{T}} g\big( \sum_{i \in \mathcal{I}} ( Ax_{i,t} + \rho_i \circ B u_i^*(x_{i,t},t) ) \big) \\
& = & \sum_{i \in \mathcal{I}} g(x_i) +  \sum_{t \in \mathcal{T}} \sum_{i \in \mathcal{I}} g\big(( Ax_{i,t} + \rho_i \circ B u_i^*(x_{i,t},t) ) \big) \\
& = & \sum_{i \in \mathcal{I}} \big[ g(x_i) +  \sum_{t \in \mathcal{T}} g ( Ax_{i,t} + \rho_i \circ B u_i^*(x_{i,t},t) ) \big] \\
& = & \sum_{i \in \mathcal{I}} J_i^*(x_i)
\end{eqnarray*}
where the first equality follows by definition, 
the second from the definition of $\rho_i$,
the third from (\ref{Eq:NSCDecomp2FH}),
the fourth from $g \in \mathcal{G}_s$,
and the fifth and sixth by definition.
\end{proof}

We can establish an analogous result for the infinite horizon setting and use it in proving Lemma \ref{Lemma:CondDecomp2IH}:

\begin{prop}
\label{Prop:EquivCondsIH}
Consider system (\ref{Eq:DTsystem1}),
a decomposition of $\mathcal{X}$ over $A$,
a cost function (\ref{Eq:CostFunction}) satisfying $g \in \mathcal{G}_s$,
a DP problem $(A,B,g,\alpha)$ and a family $\big\{ (A,\rho_i \circ B ,g,\alpha) \big\}_{ i \in \mathcal{I}}$.
The following two statements are equivalent:
\begin{enumerate}[(a)]
\item For any choice of optimal policies $\pi_i^*$, $i \in \mathcal{I}$,
there exists an optimal policy $\pi^*$ such that (\ref{Eq:PiDecomp2Cond}) holds.
\item For any choice of optimal control laws of $\big\{ (A,\rho_i \circ B ,g,\alpha) \big\}$, $i \in \mathcal{I}$,
there exists an optimal control law of $(A,B,g,\alpha)$ such that (\ref{Eq:NSCDecomp2IH}) holds.
\end{enumerate}
\end{prop}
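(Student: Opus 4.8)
The plan is to mirror the proof of Proposition \ref{Prop:EquivCondsFH} almost verbatim, the only structural difference being that in the infinite horizon setting the optimal control laws appearing in (\ref{Eq:u*IH}) are \emph{stationary}: they carry no explicit time index, so every occurrence of $u^*(\cdot,t)$ and $u_i^*(\cdot,t)$ in the finite horizon argument is replaced by $u^*(\cdot)$ and $u_i^*(\cdot)$. Concretely, I would fix $x \in \mathcal{X}$ and a choice of optimal subproblem policies $\pi_i^*(x_i)$, $i \in \mathcal{I}$, let $\{x_{i,t}\}_{t \in \mathbb{Z}_+}$ be the corresponding trajectories of the subsystems (\ref{Eq:DTsystem3}), and record the associated stationary control laws through $u_i^*(x_{i,t}) = \pi_{i,t}^*(x_i)$. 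Note that, unlike the cost identities proved elsewhere in the paper, this proposition relates controls to controls pointwise, so no convergence of the discounted sum is involved here.

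For $(a) \Rightarrow (b)$ I would take an optimal policy $\pi^*$ satisfying (\ref{Eq:PiDecomp2Cond}), with stationary control law $u^*$ determined by $u^*(x_t) = \pi_t^*(x)$ along the trajectory of (\ref{Eq:DTsystem1}), and run the same chain of equivalences as in Proposition \ref{Prop:EquivCondsFH}: add $Ax$ to both sides of (\ref{Eq:PiDecomp2Cond}), use $x = \sum_{i \in \mathcal{I}} x_i$ together with the invariance (\ref{Eq:Xinvariance}) to rewrite $Ax = \sum_{i \in \mathcal{I}} Ax_i$, group the right-hand side into $\sum_{i \in \mathcal{I}} [Ax_i + \rho_i \circ B u_i^*(x_i)]$, and finally apply $\rho_i$ and use the independence of the $\mathcal{X}_i$ to extract the per-$i$ identity (\ref{Eq:NSCDecomp2IH}). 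Every step is reversible, so the same chain read backwards gives $(b) \Rightarrow (a)$: given an optimal control law $u^*$ satisfying (\ref{Eq:NSCDecomp2IH}), the policy defined by $\pi_t^*(x) = u^*(x_t)$ is optimal and satisfies (\ref{Eq:PiDecomp2Cond}).

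The step I expect to require the most care, and which is the true content behind the formal chain, is the identification of the projection of the full trajectory with the subsystem trajectory, namely $\rho_i(x_t) = x_{i,t}$ for all $t$ and $i$. This is what licenses passing between the trajectory-wise policy relation (\ref{Eq:PiDecomp2Cond}), quantified over $t$, and the state-feedback relation (\ref{Eq:NSCDecomp2IH}), quantified over all $x$. I would establish it by applying $\rho_i$ to (\ref{Eq:DTsystem1}) and invoking (\ref{Eq:Xinvariance}) to obtain $\rho_i(x_{t+1}) = A\rho_i(x_t) + \rho_i \circ B u_t$, which is precisely the subsystem recursion (\ref{Eq:DTsystem3}); since both the projected trajectory and the subsystem trajectory start at $x_i = \rho_i(x)$ and are driven by the same controls, they coincide for all $t$. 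Because $x$ ranges over all of $\mathcal{X}$ and the control law is stationary, quantifying (\ref{Eq:NSCDecomp2IH}) over every state is exactly what matches the requirement that (\ref{Eq:PiDecomp2Cond}) hold along every trajectory and for every $t$, which closes the equivalence.
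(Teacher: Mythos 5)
Your proposal is correct and follows essentially the same route as the paper, whose own proof of this proposition is simply "similar to Proposition \ref{Prop:EquivCondsFH}, omitted": you reproduce that finite-horizon chain of reversible equivalences with the time index dropped for the stationary laws of (\ref{Eq:u*IH}). Your explicit inductive identification $\rho_i(x_t)=x_{i,t}$, justified via $A$-invariance and the fact that (\ref{Eq:PiDecomp2Cond}) (resp.\ (\ref{Eq:NSCDecomp2IH})) forces the projected trajectory to obey the subsystem recursion (\ref{Eq:DTsystem3}) with matching inputs, is a point the paper leaves implicit, and spelling it out is a sound addition rather than a deviation.
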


\begin{proof}
The proof is similar to that of Proposition \ref{Prop:EquivCondsFH} and is omitted for brevity.
\end{proof}

\begin{proof}[Proof of Lemma \ref{Lemma:CondDecomp2IH}]
Having established in Proposition \ref{Prop:EquivCondsIH} the equivalence between conditions 
(\ref{Eq:PiDecomp2Cond}) and (\ref{Eq:NSCDecomp2IH}),
what is left is to show that (\ref{Eq:NSCDecomp2IH}) $\Rightarrow$ (\ref{Eq:JDecomp2Cond}).

Assume that (\ref{Eq:NSCDecomp2IH}) holds,
let $\{ x_{i,t} \}_{t \in \overline{\mathcal{T}}}$ be the state trajectory of the $i^{th}$ subsystem under 
the optimal control law $u_i^*(x_{i,t})$, $i \in \mathcal{I}$,
and let $\{ x_t \}_{t \in \overline{\mathcal{T}}}$ be the state trajectory of (\ref{Eq:DTsystem1}) under 
the corresponding optimal control law $u^*(x_{t})$.
For any $x \in \mathcal{X}$, we have
\begin{eqnarray*}
J^*(x)
& = & g(x) + \sum_{t =0}^{\infty} \alpha^{t+1} g(Ax_t+ B u^*(x_t)) \\
& = & g(x) +  \sum_{t=0}^{\infty} \alpha^{t+1} g\big( \sum_{i \in \mathcal{I}} \rho_i ( Ax_t+ B u^*(x_t ) \big) \\
& = & g(x) +  \sum_{t=0}^{\infty} \alpha^{t+1} g\big( \sum_{i \in \mathcal{I}} ( Ax_{i,t} + \rho_i \circ B u_i^*(x_{i,t}) ) \big) \\
& = & \sum_{i \in \mathcal{I}} g(x_i) +  \sum_{t =0}^{\infty} \alpha^{t+1} \sum_{i \in \mathcal{I}} g\big(( Ax_{i,t} + \rho_i \circ B u_i^*(x_{i,t}) ) \big) \\
& = & \sum_{i \in \mathcal{I}} \big[ g(x_i) +  \sum_{t=0}^{\infty} \alpha^{t+1} g ( Ax_{i,t} + \rho_i \circ B u_i^*(x_{i,t}) ) \big] \\
& = & \sum_{i \in \mathcal{I}} J_i^*(x_i)
\end{eqnarray*}
where the first equality follows by definition, 
the second from the definition of $\rho_i$,
the third from (\ref{Eq:NSCDecomp2IH}),
the fourth from $g \in \mathcal{G}_s$,
and the fifth and sixth by definition.
\end{proof}

\begin{proof}[Proof of Theorem \ref{Thm:2implies1}]
Assume that $\big\{ (A,\rho_i \circ B,g,T) \big\}_{ i \in \mathcal{I}}$ is a decomposition of $(A,B,g,T)$.
By Definition \ref{Def:Decomp2}, (\ref{Eq:PiDecomp2Cond}) holds for all $x \in \mathcal{X}$.
In particular, for $z \in \mathcal{X}_i$, we have the following $\forall t \in \mathcal{T}$:
\begin{eqnarray*}
B \pi_t^*(z) & = & \sum_{i \in \mathcal{I}} \rho_i \circ B \pi_{i,t}^* (z_i) \\
& = & \rho_i \circ B \pi_{i,t}^* (z) +  \sum_{j \in \mathcal{I}, j \neq i} \rho_j \circ B \pi_{j,t}^* (0) \\
& = & \rho_i \circ B \pi_{i,t}^* (z) 
\end{eqnarray*}
where the second equality follows from the fact that $z \in \mathcal{X}_i$ and the third equality follows
from Proposition \ref{Prop:Pi0} with $\rho_j \circ B$ replacing $B$.
We thus conclude that for $z \in \mathcal{X}_i$,
$B \pi^*_t(z) \in \mathcal{E}_i$, $\forall t \in \mathcal{T}$.

Now pick an $x \in \mathcal{X}$, a choice of optimal policies $\pi_i^*$, $i \in \mathcal{I}$ and an 
optimal policy $\pi^*$ satisfying (\ref{Eq:PiDecomp2Cond}).
Let $\{ x_{i,t}\}_{t\in \overline{T}}$ and $\{ x_t\}_{t \in \overline{\mathcal{T}}}$ be the corresponding state trajectories.
By (\ref{Eq:PiDecomp2Cond}), we have for all $t \in \mathcal{T}$
\begin{eqnarray*}
B u^*(x_t,t) & = & B \pi_t^*(x) \\
& = & \sum_{i \in \mathcal{I}} \rho_i \circ B \pi_{i,t}^* (x_i) \\
& = & \sum_{i \in \mathcal{I}} B \pi_{t}^* (x_i)  \\
& = & \sum_{i \in \mathcal{I}} B u_{i}^* (x_{i,t},t) 
\end{eqnarray*}
from which we have $u^*(x_t,t) - \sum_{i \in \mathcal{I}} u_{i}^* (x_{i,t},t) \in \mathcal{N}(B)$,
and hence $u^*(x_t,t) = \sum_{i \in \mathcal{I}} u_{i}^* (x_{i,t},t)$ since $B$ is injective.
It thus follows that $u^*(x_t,t) \in \sum_{i \in \mathcal{I}} \mathcal{E}_i$, for all choices of $x_t$ and $t$,
and thus (\ref{Eq:NSCDecomp1FH}) holds and $\big\{ (A,B|_{\mathcal{E}_i},g, \alpha) \big\}_{ i \in \mathcal{I}}$ 
is a decomposition of $(A,B,g,T)$.

The proof for the infinite horizon case is similar, and is thus omitted for brevity.
\end{proof}

\section{Illustrative Examples}
\label{Sec:Examples}

Our first example is a familiar instance of dynamic programming:

\begin{example}
Consider the case where $\mathcal{X}=\mathbb{R}^n, \mathcal{U}=\mathbb{R}^m$, 
and the dynamics $x_{t+1}=Ax_t+Bu_t$ with $A \in \mathbb{R}^{n \times n}, B \in \mathbb{R}^{n \times m}$ over a finite horizon $t \in \left\{0, 1, \hdots, T\right\}, T \in \mathbb{Z}^+$. Let $g(x)=x^T Px$ where $P$ is positive-definite.
Then $J_t^*(x) = x^T K_t x$ with $K_T=P$ and $K_t$ given by the backward algebraic Riccati recursion (assuming that $B$ has full column rank)
\begin{displaymath}
K_t = P + A^T K_{t+1} A - A^T K_{t+1} B (B^T K_{t+1} B)^{-1} B^T K_{t+1} A
\end{displaymath} 
and the optimal controller is given by
\begin{displaymath}
u_t^*(x) = -(B^T K_{t} B)^{-1} B^T K_{t} Ax.
\end{displaymath}
Now let $A=S J S^{-1}$ be the canonical Jordan decomposition of $A$ and denote by $S_i$ the submatrix of $S$ consisting of the columns of $S$ corresponding to the $i^{th}$ Jordan block of $J$. Define $\mathcal{X}_i=\mathcal{R}(S_i)$. Then by construction $\mathcal{X}_i$ is $A$-invariant and  $\mathbb{R}^n=\oplus_{i \in \mathcal{I}} \mathcal{X}_i$. Assume in addition that the conditions of Thm. \ref{cor:rangeB} are true. This implies that the subspaces $\mathcal{X}_i$ must be $K_t$-orthogonal in order for $J_t^*$ to split, i.e.
$x_i^T K_t x_j=0$, whenever $x_i \in \mathcal{X}_i, x_j \in \mathcal{X}_j, i \neq j$. Note also that $\mathbb{R}^m=\oplus_{i \in \mathcal{I}} \mathcal{E}_i$.
Consider the representation of $A,P$ on a basis of $\mathbb{R}^n$
given by the union of basis of each of the subspaces $\mathcal{X}_i$; then $P,A$ will be block diagonal. Moreover choosing as a basis of $\mathbb{R}^m$ the union of basis of the subspaces $\mathcal{E}_i$ and representing the image of $B$ using the above mentioned basis of $\mathbb{R}^n$, yields $B$ in a block diagonal form as well (even though $B$ need not be square). Then it is seen that the algebraic Riccati recursion becomes block diagonal, the block recursions representing the Riccati recursions corresponding to the subsystems $(A_i,B_i)$. Finally, the optimal controller itself is diagonal, each of its entries giving an optimal controller for the corresponding subsystem.
\end{example}

Our next example considers a finite state system and illustrates that (\ref{Eq:RangeBCondition}) 
is indeed sufficient, but not necessary in general, for a decomposition to exist:

\begin{example}
Let $\mathcal{X}=\left(\mathbb{Z}_3\right)^{3}, \mathcal{U} = \left(\mathbb{Z}_3 \right)^{2}$ and consider the system
\begin{displaymath}
x_{t+1}= \left[\begin{array}{ccc} 1 & 1 & 0 \\ 0 & 2 & 0 \\ 0 & 0 & 1 \end{array}\right] x_t + \left[\begin{array}{cc} 1 & 0 \\ 1 & 1 \\ 0 & 1 \end{array} \right] u_t
\end{displaymath}
The invariant subspaces are $\mathcal{X}_1=<\left[1 \, \, 0 \, \, 0 \right]^T>, \mathcal{X}_2=<\left[1 \, \, 1 \, \, 0 \right]^T>, \mathcal{X}_3=<\left[ 0 \, \, 0 \, \, 1 \right]^T>$. 
Note that $\mathcal{R}(B) \cap \mathcal{X}_1=\mathcal{R}(B) \cap \mathcal{X}_3=\left\{0\right\}, \mathcal{R}(B) \cap \mathcal{X}_2=\mathcal{X}_2$. 
The three subsystems are 
\begin{align} \nonumber
(A_1,B_1) &= \left(\left[\begin{array}{ccc} 1 & 0 & 0 \\ 0 & 0 & 0 \\ 0 & 0 & 0 \end{array}\right],\left[\begin{array}{cc} 0 & 0 \\ 0 & 0 \\ 0 & 0 \end{array} \right] \right) \nonumber \\
(A_2,B_2) &= \left(\left[\begin{array}{ccc} 0 & 2 & 0 \\ 0 & 2 & 0 \\ 0 & 0 & 0 \end{array}\right], \left[\begin{array}{cc} 1 & 0 \\ 1 & 0 \\ 0 & 0 \end{array} \right]\right) \nonumber \\
(A_3,B_3) &= \left(\left[\begin{array}{ccc} 0 & 0 & 0 \\ 0 & 0 & 0 \\ 0 & 0 & 1 \end{array}\right],\left[\begin{array}{cc} 0 & 0 \\ 0 & 0 \\ 0 & 0 \end{array} \right] \right). \nonumber \\
\end{align} 
Since $\left(\mathbb{Z}_3 \right)^{3}=\mathcal{X}_1 \oplus \mathcal{X}_2 \oplus \mathcal{X}_3$, every element of $\left(\mathbb{Z}_3 \right)^{3}$ 
can be written as a unique linear combination of $\left[1 \, \, 0 \, \, 0 \right]^T, \left[1 \, \, 1 \, \, 0 \right]^T, \left[0 \, \, 0 \, \, 1 \right]^T$.
In particular for any $\alpha_1,\alpha_2,\alpha_3 \in \mathbb{Z}_3$ we have that
\begin{displaymath}
\left[\begin{array}{c} \alpha_1 \\ \alpha_2 \\ \alpha_3 \end{array} \right]=(\alpha_1+2\alpha_2)\left[\begin{array}{c} 1 \\ 0 \\ 0 \end{array} \right]+
\alpha_2 \left[\begin{array}{c} 1 \\ 1 \\ 0 \end{array} \right] + \alpha_3 \left[\begin{array}{c} 0 \\ 0 \\ 1 \end{array} \right]. 
\end{displaymath}
Now consider a cost function $g: \left(\mathbb{Z}_3 \right)^{3} \rightarrow \mathbb{R}^+$ with the property that 
$g\left(\left[\begin{array}{c} \alpha_1 \\ \alpha_2 \\ \alpha_3 \end{array} \right]\right) = g\left(\alpha_2 \left[\begin{array}{c} 1 \\ 1 \\ 0 \end{array} \right]\right)$, i.e. $g$ penalizes only $\mathcal{X}_2$. 
Consider also a finite horizon $T=1$. 
Then the optimal controller corresponding to state $\left[x_1 \,  x_2 \, x_3\right]^T$ is given as the solution to the problem
\begin{displaymath}
\min_{u_1,u_2 \in \mathbb{Z}_3} g\left(\left[\begin{array}{ccc} 1 & 1 & 0 \\ 0 & 2 & 0 \\ 0 & 0 & 1 \end{array}\right]\left[\begin{array}{c} x_1 \\ x_2 \\ x_3 \end{array} \right]+\left[\begin{array}{cc} 1 & 0 \\ 1 & 1 \\ 0 & 1 \end{array} \right]\left[\begin{array}{c} u_1 \\ u_2  \end{array} \right] \right) 
\end{displaymath} 
which can equivalently be written as
\begin{displaymath}
\min_{u_1,u_2 \in \mathbb{Z}_3} g\left(\left[\begin{array}{ccc} 1 & 1 & 0 \\ 0 & 2 & 0 \\ 0 & 0 & 1 \end{array}\right]\left[\begin{array}{c} x_1 \\ x_2 \\ x_3 \end{array} \right]+ \right.
\left. \left[\begin{array}{cccc} 1 & 1 & 1 &0 \\ 1 & 0 & 1 &  0 \\ 0 & 0 & 0 & 1 \end{array} \right] \left[\begin{array}{c} u_1 \\ 2u_2 \\ u_2 \\ u_2 \end{array} \right]\right) 
\end{displaymath}
or equivalently using the property of $g$
\begin{displaymath}
\min_{u_1,u_2 \in \mathbb{Z}_3} g\left(\left[\begin{array}{ccc} 1 & 1 & 0 \\ 0 & 2 & 0 \\ 0 & 0 & 1 \end{array}\right]\left[\begin{array}{c} x_2 \\ x_2 \\ 0 \end{array} \right]+(u_1+u_2)\left[\begin{array}{c} 1  \\ 1  \\ 0 \end{array} \right] \right)  
\end{displaymath} 
which is equivalent to
\begin{displaymath}
\min_{u_1 \in \mathbb{Z}_3} g\left(\left[\begin{array}{ccc} 0 & 2 & 0 \\ 0 & 2 & 0 \\ 0 & 0 & 0 \end{array}\right]\left[\begin{array}{c} x_2 \\ x_2 \\ 0 \end{array} \right]+u_1\left[\begin{array}{c} 1 \\ 1 \\ 0  \end{array} \right]  \right) 
\end{displaymath}
or equivalently
\begin{equation}
\min_{u_1 \in \mathbb{Z}_3} g\left(\left[\begin{array}{ccc} 0 & 2 & 0 \\ 0 & 2 & 0 \\ 0 & 0 & 0 \end{array}\right]\left[\begin{array}{c} x_2 \\ x_2 \\ 0 \end{array} \right]+\left[\begin{array}{cc} 1 & 0 \\ 1 & 0  \\ 0 & 0 \end{array} \right]\left[\begin{array}{c} u_1 \\ 0 \end{array} \right]\right) \nonumber
\end{equation}
the latter being precisely the problem giving the optimal controller of subsystem 2. 
Consequently, if $\left[u_1^* \, 0 \right]^T$ is an optimal controller for subsystem 2 corresponding to state $\left[ x_2 \, x_2 \, 0 \right]^T$, 
then $\left[u_1^* \, 0 \right]^T$ is also an optimal controller for the original system corresponding to state $\left[x_1 \, x_2 \, x_3 \right]^T$ for any $x_1, x_3 \in \mathbb{Z}_3$.
\end{example}

Our last example demonstrates that one notion of decomposition implies the other, but not vice-versa:

\begin{example}
Consider the reachable linear system over the real number field with
\begin{displaymath}
A =\left[
\begin{array}{ccc}
1 & 0 & 0 \\
0 & 1 & 0 \\
0 & 0 & 0
\end{array}
\right], \, \, \, 
B = \left[
\begin{array}{cc}
1 & 1 \\
0 & 1 \\
0 & 1
\end{array}
\right].
\end{displaymath}
Take the invariant subspaces to be $\mathcal{X}_1=<e_1>_{\mathbb{R}}, \mathcal{X}_2=<e_2>_{\mathbb{R}}, \mathcal{X}_3=<e_3>_{\mathbb{R}}$ where $e_i$ is the standard unit vector of $\mathbb{R}^{3 \times 1}$. Note that $\mathcal{E}_1=<\left[\begin{array}{c} 1 \\ 0 \end{array}\right]>_{\mathbb{R}}, \mathcal{E}_2=\mathcal{E}_3=\left\{0\right\}, \mathcal{V}=<\left[0 \, \, \, 1\right]'>_{\mathbb{R}}$. Let $h : \mathbb{R} \rightarrow \mathbb{R}^+$ be defined by
\begin{displaymath}
h(\xi) = \left\{
\begin{array}{cc}
1, & \xi \neq 0 \\
0, & \xi=0
\end{array}
\right.
\end{displaymath}
and for any state $x=(x_1,x_2,x_3)$ define a cost function of the state
\begin{displaymath}
g(x) = h(x_1)+h(x_2)+h(x_3). \nonumber
\end{displaymath}
Finally consider for simplicity a finite horizon $T=1$.
The systems of $\big\{ (A,B|_{\mathcal{E}_i},g,\alpha) \big\}_{ i \in \mathcal{I}}$ and their
 corresponding solutions are
\begin{align}
A|_{\mathcal{X}_1} =\left[
\begin{array}{ccc}
1 & 0 & 0 \\
0 & 0 & 0 \\
0 & 0 & 0
\end{array}
\right], \, &
B|_{\mathcal{E}_1} = \left[
\begin{array}{cc}
1 & 0 \\
0 & 0 \\
0 & 0
\end{array}
\right] \nonumber \\
A|_{\mathcal{X}_2}=\left[
\begin{array}{ccc}
0 & 0 & 0 \\
0 & 1 & 0 \\
0 & 0 & 0
\end{array}
\right], \, &
B|_{\mathcal{E}_2} = \left[
\begin{array}{cc}
0 & 0 \\
0 & 0 \\
0 & 0
\end{array}
\right] \nonumber \\
A|_{\mathcal{X}_3} =\left[
\begin{array}{ccc}
0 & 0 & 0 \\
0 & 0 & 0 \\
0 & 0 & 0
\end{array}
\right], \, &
B|_{\mathcal{E}_3} = \left[
\begin{array}{cc}
0 & 0 \\
0 & 0 \\
0 & 0
\end{array}
\right] \nonumber
\end{align} 
\begin{align}
\bar{u}^*_{1,T-1}(x_1,0,0) &= [-x_1 \, \, 0]', & \bar{J}^*_{1,T-1}(x_1,0,0) &=h(x_1) \nonumber \\
\bar{u}^*_{2,T-1}(0,x_2,0) &= [0 \, \, 0]', & \bar{J}^*_{2,T-1}(0,x_2,0)&=2h(x_2) \nonumber \\
\bar{u}^*_{3,T-1}(0,0,x_3) &= [0 \, \, 0]', & \bar{J}^*_{3,T-1}(0,0,x_3)&=h(x_3)\nonumber
\end{align} and $\bar{u}^*_{1,T-1}(x_1,0,0)+\bar{u}^*_{2,T-1}(0,x_2,0)+\bar{u}^*_{3,T-1}(0,0,x_3)=[-x_1 \, \, 0]'=u^*_{T-1}(x_1,x_2,x_3)$ and $J^*_{T-1}(x_1,x_2,x_3)=\bar{J}^*_{1,T-1}(x_1,0,0)+
\bar{J}^*_{2,T-1}(0,x_2,0)+\bar{J}^*_{3,T-1}(0,0,x_3)=h(x_1)+2h(x_2)+h(x_3)$.
So $\big\{ (A,B|_{\mathcal{E}_i},g,\alpha) \big\}_{ i \in \mathcal{I}}$ is decomposition of $(A,B,g,T)$. 
The systems associated with the family of problems $\big\{ (A,\rho_i \circ B,g,T) \big\}_{ i \in \mathcal{I}}$
\begin{align}
A|_{\mathcal{X}_1} =\left[
\begin{array}{ccc}
1 & 0 & 0 \\
0 & 0 & 0 \\
0 & 0 & 0
\end{array}
\right], \, &
\rho_1 \circ B = \left[
\begin{array}{cc}
1 & 1 \\
0 & 0 \\
0 & 0
\end{array}
\right] \nonumber \\
A|_{\mathcal{X}_2} =\left[
\begin{array}{ccc}
0 & 0 & 0 \\
0 & 1 & 0 \\
0 & 0 & 0
\end{array}
\right], \, &
\rho_2 \circ B = \left[
\begin{array}{cc}
0 & 0 \\
0 & 1 \\
0 & 0
\end{array}
\right] \nonumber \\
A|_{\mathcal{X}_3} =\left[
\begin{array}{ccc}
0 & 0 & 0 \\
0 & 0 & 0 \\
0 & 0 & 0
\end{array}
\right], \, &
\rho_3 \circ B = \left[
\begin{array}{cc}
0 & 0 \\
0 & 0 \\
0 & 1
\end{array}
\right] \nonumber.
\end{align} 
Notice that $\mathcal{R}(A|_{\mathcal{X}_i}) \subseteq\mathcal{R}(\rho_i \circ B), \, \forall i=1,2,3,$ and so
$J^*_{1,T-1}(x_1,0,0)=h(x_1)$, $J^*_{2,T-1}(0,x_2,0)=h(x_2)$, $J^*_{3,T-1}(0,0,x_3)=h(x_3)$. 
Hence, whenever $x_2 \neq 0$, 
$J^*_{T-1}(x_1,x_2,x_3)=h(x_1)+2h(x_2)+h(x_3) >h(x_1)+h(x_2)+h(x_3) =J^*_{1,T-1}(x_1,0,0) +  J^*_{2,T-1}(0,x_2,0)+ J^*_{3,T-1}(0,0,x_3)$ 
and $\big\{ (A,\rho_i \circ B,g,T) \big\}_{ i \in \mathcal{I}}$ cannot be a decomposition of $(A,B,g,T)$.
\end{example}

\color{black}
\section{Future Work}
\label{Sec:Conclusions}

The decompositions considered in this manuscript are natural and intuitive, 
but the conditions for their existence may be fairly restrictive in practice.
As such, future work will focus on the study of instances where such exact decompositions do not exist, 
but where ``small" perturbations of the original dynamics would allow them to exist.
Our focus will be on getting a handle on the difference between the exact and the approximate solutions in such a setting.

\section{Acknowledgments}

This research was supported by NSF CAREER award ECCS 0954601 
and AFOSR Young Investigator award FA9550-11-1-0118.

\bibliographystyle{IEEEtranS}
\bibliography{References}

\end{document}